\RequirePackage{fix-cm}
\documentclass[smallextended]{svjour3}       
\smartqed  
\usepackage{lipsum}
\usepackage{amsfonts}
\usepackage{graphicx}
\usepackage{epstopdf}
\usepackage{graphicx}
\usepackage{amsmath,multirow}
\usepackage{amssymb}
\usepackage{amscd}
\usepackage{amsfonts}
\usepackage{enumerate}
\usepackage{mathrsfs}
\usepackage{xypic}
\usepackage{stmaryrd}
\usepackage{graphicx}
\usepackage{fancybox}
\usepackage{portland}
\usepackage{color}
\usepackage{amsmath}
\usepackage{exscale}
\usepackage{enumitem,array}%
\usepackage{amssymb}
\usepackage{amsmath}
\usepackage{amssymb}
\usepackage{amscd}
\usepackage{amsfonts}
\usepackage{enumerate}
\usepackage{mathrsfs}
\usepackage{xy}
\usepackage{stmaryrd}
\usepackage{graphicx}
\usepackage{fancybox}
\usepackage{color}
\usepackage{multirow}
\usepackage{exscale}
\usepackage{enumitem,array}%
\usepackage{subfigure}
\usepackage{extarrows}
\usepackage{amsfonts}
\usepackage{xypic}
\usepackage{tikz}
\usepackage{verbatim}
\usepackage{graphicx}
\usetikzlibrary{arrows,chains,matrix,positioning,scopes}
\makeatletter
\tikzset{join/.code=\tikzset{after node path={%
\ifx\tikzchainprevious\pgfutil@empty\else(\tikzchainprevious)%
edge[every join]#1(\tikzchaincurrent)\fi}}}
\makeatother
\tikzset{>=stealth',every on chain/.append style={join},
         every join/.style={->}}
\tikzstyle{labeled}=[execute at begin node=$\scriptstyle,
   execute at end node=$]

\def\pdt2{\partial_t^2}
\def\pdx2{\partial_x^2}
\newcommand{\norm}[1]{\left\Vert#1\right\Vert}
\newcommand{\normmm}[1]{{\left\vert\kern-0.25ex\left\vert\kern-0.25ex\left\vert #1
    \right\vert\kern-0.25ex\right\vert\kern-0.25ex\right\vert}}

\def\pdt2{\partial_t^2}
\def\pdx2{\partial_x^2}

\newcommand{\abs}[1]{\left\vert#1\right\vert}

\DeclareMathOperator{\sinc}{sinc}

\usepackage{url}
\newtheorem{mydef}{Definition}[section]
\newtheorem{mytheo}{Theorem}[section]
\newtheorem{myprop}{Proposition}[section]

\newtheorem{mylemma}{Lemma}[section]

\begin{document}

\title{
Construction and analysis of an explicit  fourth-order symmetric low-regularity trigonometric integrator for the semilinear Klein-Gordon equation
 }

\titlerunning{Fourth-order low-regularity trigonometric integrator}        

\author{Bin Wang
\and Zhen Miao \and Yaolin Jiang
}


\institute{Bin Wang 
\at
              School of Mathematics and Statistics, Xi'an Jiaotong
University, Xi'an, Shannxi 710049, P.R.China\\
              \email{wangbinmaths@xjtu.edu.cn}           
               \and
           Zhen Miao \at
            School of Mathematics and Statistics,
          Northwestern Polytechnical University,
         Xi'an,
Shannxi  710072, P.R. China\\
               \email{linknwxa@nwpu.edu.cn}
 \and
           Yaolin Jiang \at
            School of Mathematics and Statistics, Xi'an Jiaotong
University, Xi'an,
Shannxi 710049, P.R.China\\
               \email{yljiang@mail.xjtu.edu.cn}}

\date{Received: date / Accepted: date}

\maketitle

\begin{abstract}
In this paper, we propose and analyze a novel explicit fourth-order symmetric low-regularity trigonometric integrator for the semilinear Klein--Gordon equation in $d$ dimensions ($d=1,2,3$) with non-smooth solutions. The integrator is constructed by fully exploiting Duhamel's formula, employing a twisted function tailored for trigonometric integrals, and applying a symmetrization procedure. A rigorous error analysis demonstrates that the proposed scheme achieves fourth-order accuracy in the energy space under low regularity requirements, specifically in $H^{2+\mu}(\mathbb{T}^d)\times H^{1+\mu}(\mathbb{T}^d)$ with $	
				\mu=
				\begin{cases}
					1, & d=1,\\
					d/2+\epsilon, & d=2,3,
				\end{cases}$ and $\epsilon>0$. Numerical experiments illustrate that the proposed method offers significantly improved accuracy compared to well-known exponential integrators when approximating the Klein--Gordon equation with non-smooth solutions.
 \keywords{Fourth order scheme \and Low-regularity integrator \and Error estimate \and  Klein-Gordon equation \and Non-smooth solutions}
 \subclass{35L70 \and 65M12 \and 65M15 \and 65M70}
\end{abstract}

\section{Introduction}\label{sec:introduction}
This paper is concerned with the numerical solution of the following semilinear Klein--Gordon equation (SKGE):
\begin{equation}\label{klein-gordon}
\left\{
\begin{aligned}
&\partial_{tt}u(t,x)-\Delta u(t,x)+\rho u(t,x)=f\big(u(t,x)\big),\quad 0< t\leq T,\ x\in \mathbb{T}^d\subset \mathbb{R}^d,\\
&u(0,x)=u_0(x),\ \partial_{t}u(0,x)=v_0(x), \ x\in \mathbb{T}^d,
\end{aligned}
\right.
\end{equation}
subject to periodic boundary conditions. Here, $u(t,x)$ represents the wave displacement at time $t$ and position $x$, $d \in {1,2,3}$ denotes the spatial dimension, $\rho \geq 0$ is a parameter interpreted physically as the particle mass, and $f(u): \mathbb{R} \rightarrow \mathbb{R}$ is a given nonlinear function. Theoretically, the SKGE is globally well-posed for initial data $(u_0, v_0) \in H^{\nu}(\mathbb{T}^d)\times H^{\nu-1}(\mathbb{T}^d)$ with $\nu \geq 1/3$ in one spatial dimension \cite{Bourgain} and $\nu \geq 1$ in two and three dimensions \cite{Ginibre}.
Several physical models arise from \eqref{klein-gordon} through specific choices of $f(u)$. In the linear case $f(u)=0$, the equation is known as the linear Klein--Gordon equation, which describes spinless relativistic composite particles such as pions and Higgs bosons \cite{Davydov76}. For $f(u)=\sin(u)$, \eqref{klein-gordon} reduces to the sine--Gordon equation, which models a wide range of physical phenomena, including dislocation propagation in crystals, the transmission of ultra-short optical pulses in two-level systems, and magnetic flux dynamics in Josephson junctions \cite{Barone71}. Finally, choosing $f(u)=\lambda u^3$ yields the cubic Klein--Gordon equation, where the parameter $\lambda$ (positive for defocusing and negative for focusing interactions) governs the nonlinearity. In the limit $\rho=0$, this system reduces to the cubic wave equation, which finds extensive application in plasma physics \cite{Bellan14}.

Classical time discretization methods for the SKGE have been extensively studied in recent decades. Prominent examples include splitting methods \cite{Bao22,Buchholz2018,Faou15}, trigonometric and exponential integrators \cite{Bao13,Buchholz2021,Gauckler15,Lubich99,WIW}, uniformly accurate methods \cite{Bao14,PI1,S2,Chartier15,WZ21}, and structure-preserving methods \cite{Cano06,Chen20,Cohen08-1,LI20,zhao18}. The analysis of these classical methods typically necessitates strong regularity assumptions. This stems from the fact that, under the condition $f(u) \in L^{\infty}$, two temporal derivatives of the solution generally correspond to two spatial derivatives. Consequently, to achieve an $m$th-order accurate approximation of $(u, \partial_t u)$ in $H^{\nu}(\mathbb{T}^d)\times H^{\nu-1}(\mathbb{T}^d)$ with $\nu\geq d/2$, the initial data are generally required to reside in the stricter space $H^{\nu+m-1}(\mathbb{T}^d)\times H^{\nu+m-2}(\mathbb{T}^d)$. Thus, higher-order schemes impose increasingly stringent regularity requirements. However, such conditions are often not met in applications where initial data may exhibit limited regularity; for instance, high-intensity laser pulses exciting Langmuir waves in plasmas create non-smooth initial conditions for the electric field \cite{Borovskii1993}.

To address this limitation, significant attention has recently been directed toward equations with non-smooth initial data. Pioneering work for the KdV equation includes \cite{A5}. Another seminal contribution is \cite{O18}, which introduced a low-regularity exponential-type integrator for the nonlinear Schr\"{o}dinger equation; this scheme achieves first-order convergence in $H^{\nu}$ for initial data in $H^{\nu+1}$, relaxing the previous $H^{\nu+2}$ requirement. Building on \cite{A5,O18}, various low-regularity integrators have been developed for a wide range of equations, including the Navier–Stokes equation \cite{LI22}, Dirac equation \cite{S21}, KdV equation \cite{R22,Zhao22,LI261}, Schr\"{o}dinger equation \cite{feng23,feng25,O18,O21,O22,R21,LI262}, Boussinesq equation \cite{SU23}, Zakharov system \cite{SU24}, and stochastic PDEs \cite{Cao26}. For a broader theoretical framework, the studies \cite{A3,A1,A6,A4} introduced a general class of low-regularity integrators applicable to nonlinear PDEs, including the sine-Gordon equation.

For the Klein--Gordon system \eqref{klein-gordon}, several significant low-regularity integrators have been developed in recent years. First- and second-order methods were presented in \cite{R21}, where the second-order scheme was shown to converge in the energy space $H^1(\mathbb{T}^d)\times L^2(\mathbb{T}^d)$ under the weak regularity condition $(u_0, v_0) \in H^{1+\frac{d}{4}}(\mathbb{T}^d)\times H^{\frac{d}{4}}(\mathbb{T}^d)$. For the cubic nonlinearity $f(u)=u^3$ in one dimension ($d=1$), a symmetric low-regularity integrator was derived in \cite{Zhao23}, which achieves second-order accuracy in $H^{\nu}(\mathbb{T}) \times H^{\nu-1}(\mathbb{T})$ for $\nu>1/2$ without sacrificing regularity. More recently, for the general SKGE \eqref{klein-gordon} with a nonlinearity $f(u)$, a new integrator was proposed in \cite{LI23} achieving second-order convergence in $H^1(\mathbb{T}^d)\times L^2(\mathbb{T}^d)$ under the regularity condition $(u_0, v_0) \in H^{1+\frac{d}{4}}(\mathbb{T}^d)\times H^{\frac{d}{4}}(\mathbb{T}^d)$ for $d=1,2,3$. While requiring the same regularity as the method in \cite{R21}, the scheme in \cite{LI23} features a significantly simpler formulation.

To the best of our knowledge, the majority of existing low-regularity integrators for the Klein--Gordon equation—as well as for related equations—are limited to first- and second-order schemes. \emph{A crucial aspect is to construct concrete higher-order algorithms that preserve low-regularity properties while maintaining a compact formulation suitable for direct implementation.}
 Although the formalism of decorated trees \cite{A3,A1,A6,A4} confirms the theoretical existence of such high-order integrators, the intricate details involved in their construction have so far hindered the derivation of explicit schemes beyond second order. Generally, the design of concrete high-order low-regularity integrators faces two primary challenges: controlling spatial derivatives within the approximation to effectively relax regularity requirements, and managing the inherent structural complexity of the design process, which imposes substantial constraints on the formulation of higher-order schemes.

This paper is devoted to the explicit construction and rigorous analysis of a fourth-order low-regularity integrator. Our primary objective is to provide a streamlined derivation and formulation. Specifically, we design and analyze a fourth-order low-regularity integrator  for the semilinear Klein--Gordon equation \eqref{klein-gordon} with a general nonlinearity $f(u)$ in dimensions $d=1,2,3$. Based on Duhamel's formula, the proposed method is characterized by its structural simplicity.  The construction proceeds as follows: we first embed the structure of the SKGE into the formulation and subsequently apply a twisted function to the trigonometric integrals arising from the Duhamel formulation. By strategically selecting tractable terms from these integrals (see the approximations for $I^u$ and $I^v$ in Section \ref{sds-construct}), spatial derivatives are optimally distributed across the product terms in the remainder, yielding a third-order scheme. Finally, a symmetrization procedure is employed to upgrade the scheme to fourth order while preserving the low-regularity properties.
We prove that the proposed integrator achieves fourth-order convergence in the energy space $H^1(\mathbb{T}^d)\times L^2(\mathbb{T}^d)$ under the regularity condition
$$(u_0(x),v_0(x))\in H^{2+ \mu}(\mathbb{T}^d)\times H^{1+ \mu}(\mathbb{T}^d)$$
with $	
				\mu=
				\begin{cases}
					1, & d=1,\\
					d/2+\epsilon, & d=2,3,
				\end{cases}$ and $\epsilon>0$. The resulting scheme reads
 \begin{equation}\label{method-sec1}
\begin{aligned}
u_{1}=&\cos(h \sqrt{\mathcal{A}}) u_0+h  \sinc( h \sqrt{\mathcal{A}})  v_0+h^2\Phi_1(h \sqrt{\mathcal{A}}) f( u_0)\\&+h^3  \Psi_1(h \sqrt{\mathcal{A}})f'( u_0)v_0 +h^4  \Psi_2(h \sqrt{\mathcal{A}}) F_1(u_0,v_0),\\
v_{1}=&-h \mathcal{A} \sinc( h \sqrt{\mathcal{A}}) u_0+\cos(h \sqrt{\mathcal{A}}) v_0+
h  \Phi_2(h \sqrt{\mathcal{A}})  f( u_0) \\&+h^2\Phi_1(h \sqrt{\mathcal{A}})f'( u_0)v_0 +h^3  \Psi_1(h \sqrt{\mathcal{A}}) \big(F_1(u_0,v_0)+f'(u_0) f(u_0) \big),\\
u_{n+1}=&\cos(2h \sqrt{\mathcal{A}}) u_{n-1}+2h  \sinc( 2h \sqrt{\mathcal{A}})  v_{n-1}\\
&+ h^2\Big(\big(1-\cos(2h \sqrt{\mathcal{A}})\big)\Phi_1(h \sqrt{\mathcal{A}})+2\mathcal{A} \sinc(2 h \sqrt{\mathcal{A}})  \Phi_2(h \sqrt{\mathcal{A}}) \Big)f( u_n)\\
&+h^3 \Big(\big(1+\cos(2h \sqrt{\mathcal{A}})\big) \Psi_1(h \sqrt{\mathcal{A}})-2 \mathcal{A} \sinc(2 h \sqrt{\mathcal{A}})  \Phi_1(h \sqrt{\mathcal{A}})\Big) f'( u_n)v_n \\
&+h^4\Big( \big(1-\cos(2h \sqrt{\mathcal{A}})\big) \Psi_2(h \sqrt{\mathcal{A}})+2 \mathcal{A} \sinc(2 h \sqrt{\mathcal{A}})    \Psi_1(h \sqrt{\mathcal{A}}) \Big)  F_1(u_n,v_n)\\
&+2h^4 \mathcal{A} \sinc(2 h \sqrt{\mathcal{A}})     \Psi_1(h \sqrt{\mathcal{A}})  f'(u_n) f(u_n),  \\
v_{n+1}
   =&-2h \mathcal{A} \sinc(2 h \sqrt{\mathcal{A}}) u_{n-1}+\cos(2h \sqrt{\mathcal{A}}) v_{n-1}\\
   &+h \Big(\big(1+\cos(2h \sqrt{\mathcal{A}})\big) \Phi_2(h \sqrt{\mathcal{A}}) +2h^2 \mathcal{A} \sinc(2 h \sqrt{\mathcal{A}})  \Phi_1(h \sqrt{\mathcal{A}})\Big)  f( u_n) \\
   &+h^2\Big(\big(1-\cos(2h \sqrt{\mathcal{A}})\big)\Phi_1(h \sqrt{\mathcal{A}})-2h^2 \mathcal{A} \sinc(2 h \sqrt{\mathcal{A}})    \Psi_1(h \sqrt{\mathcal{A}})\Big) f'( u_n)v_n  \\
   &+h^3\Big(\big(1+\cos(2h \sqrt{\mathcal{A}})\big)  \Psi_1(h \sqrt{\mathcal{A}}) +2h^2 \mathcal{A} \sinc(2 h \sqrt{\mathcal{A}})   \Psi_2(h \sqrt{\mathcal{A}})\Big)  F_1(u_n,v_n) \\
      &+h^3 \big(1+\cos(2h \sqrt{\mathcal{A}})\big)  \Psi_1(h \sqrt{\mathcal{A}})   f'(u_n) f(u_n), \ \ n=1,2,\ldots, T/h-1,
\end{aligned}
\end{equation}
where  $h$ is the time stepsize, 
$F_1(u_n,v_n)= f''(u_n)(v_n^2-(\nabla u_n)^2)+\rho  f(u_n)-\rho  f'(u_n)u_n,$
 and  the coefficient functions $\Phi_1,\Phi_2,\Psi_1,\Psi_2$ are defined in \eqref{method-coefficient}.

The main contributions of this work are summarized as follows:
\begin{itemize}
\item In contrast to many existing low-regularity methods, including first- and second-order schemes, the proposed integrator \eqref{method-sec1} features a remarkably compact formulation that is readily implementable and accessible to researchers across various disciplines.
\item Furthermore, the algorithm is purely explicit and non-iterative, thereby ensuring minimal computational overhead and high efficiency.
\item We establish rigorous error bounds for the SKGE \eqref{klein-gordon} subject to non-smooth initial data, demonstrating that the integrator is robustly applicable to solutions of low regularity.
\end{itemize}

The remainder of this paper is organized as follows. Section \ref{sec:2} presents the formulation of the semi-discrete and fully discrete integrators. Section \ref{sec:3} provides a rigorous error analysis of the proposed scheme. Section \ref{sec:4} illustrates the accuracy and efficiency of the new integrator through numerical comparisons with existing methods in the literature. Finally, some conclusions are drawn in Section \ref{sec:5}.

\section{Construction of low-regularity  integrator}\label{sec:2}
In this section, we first introduce the notation and technical tools used throughout the paper. Subsequently, we present the new low-regularity integrator, including both semi-discrete and fully discrete schemes, and detail its construction in Subsection \ref{sds-construct}.
\subsection{Notations and technical tools}
For simplicity, we write $A\lesssim B$ to denote $A\leq CB$ for some generic constant $C>0$. This constant $C$ may depend on the final time $T$ and the norms $\norm{u_0}_{H^{2+ \mu}}$, $\norm{v_0}_{H^{ 1+\mu}}$, and it may vary between occurrences, but it is always independent of the number of discrete points and the time/space step sizes.

For the linear differential operator appearing in \eqref{klein-gordon}, we introduce the concise notation \begin{equation}\label{notations A}
(\mathcal{A}w)(x):=-\Delta  w(x)+ \rho w(x)\end{equation} with $w(x)$ on $\mathbb{T}^d$.
In the construction of the integrator, we will  frequently use the following functions
\begin{equation}\label{notations w}\begin{aligned}&\sinc(t):=\frac{\sin(t)}{t},\ \quad   \qquad \alpha(t):=\left(
              \begin{array}{c}
                \cos(t\sqrt{\mathcal{A}})  \\
                t \sinc( t\sqrt{\mathcal{A}})  \\
              \end{array}
            \right),\\&\beta(t):=\left(
              \begin{array}{c}
                t \sinc( t\sqrt{\mathcal{A}})    \\
                \cos(t\sqrt{\mathcal{A}}) \\
              \end{array}
            \right),\  \gamma(t): =\left(
              \begin{array}{c}
                \sqrt{\mathcal{A}} \sin(t\sqrt{\mathcal{A}})    \\
                \cos(t\sqrt{\mathcal{A}}) \\
              \end{array}
            \right).
\end{aligned}\end{equation}

The Sobolev space of functions on the domain  $\mathbb{T}^d=[a,b]^d$ is considered in the whole paper,  and we shall refer it as  $H^{\nu}(\mathbb{T}^d)$ for any $\nu\geq 0$.
The norm on this Sobolev space    is denoted by
$$\norm{f}^2_{H^{\nu}}=(b-a)^{d}\sum_{\xi\in \mathbb{Z}^d} (1+\abs{\xi}^2)^{\nu}\abs{\widehat{f}(\xi)}^2,$$
where $\widehat{f}$ is the Fourier transform of   $f(x)$ which is defined by
$$\widehat{f}(\xi)=\frac{1}{(b-a)^d}\int_{\mathbb{T}^d} e^{-i x \cdot \xi}f(x) dx$$ with $\xi=(\xi^1,\xi^2,\ldots,\xi^d)\in \mathbb{Z}^d$
and $x=(x^1,x^2,\ldots,x^d)\in \mathbb{Z}^d$. Here $\abs{\cdot}$ is defined as
$$\abs{\xi}=\sqrt{(\xi^1)^2 +(\xi^2)^2+\ldots+(\xi^d)^2}$$ and $$x \cdot \xi=x^1\xi^1+x^2\xi^2+\ldots+x^d\xi^d.$$

For the norm on  Sobolev space,  some obvious properties are  frequently used in this article and we summarize them as follows.
 \begin{itemize}
   \item With the notation $J^{\nu}:=(1-\Delta)^{\frac{\nu}{2}}$ for $\nu\geq 0$, it is clear that
$$
\widehat{ J^{\nu}f}(\xi)=(1+\abs{\xi}^2)^{\frac{\nu}{2}}\widehat{f}(\xi),\ \ \norm{J^{\nu}f}_{L^2}=\norm{f}_{H^{\nu}}.
$$
   \item  For any function $\sigma: \mathbb{Z}^d\rightarrow \mathbb{C}^d$ such that $\abs{\sigma(\xi)}\leq C_{\sigma} (1+\abs{\xi}^2)^m$
with some constants $C_{\sigma}$ and $m$,  the operator  $$\sigma( \sqrt{-\Delta}): H^{\nu}(\mathbb{T}^d)\rightarrow H^{\nu-m}(\mathbb{T}^d)$$ has the following result  for $\nu\geq m$
$$\sigma(\sqrt{-\Delta})f(x)=\sum_{\xi\in \mathbb{Z}^d} \sigma(\xi)  \widehat{f}(\xi)  e^{i x \cdot \xi},\ \ \norm{\sigma(\sqrt{-\Delta})f}^2_{H^{\nu-m}}  \leq  C_{\sigma}  \norm{f}^2_{H^{\nu}}.$$

   \item  For the special  operator $ \sqrt{-\Delta}^{-1} $  which is defined by  $$\widehat{ \sqrt{-\Delta}^{-1}f}(\xi)=\left\{
\begin{split}
&
\frac{\widehat{ f}(\xi)}{\abs{\xi}},\quad
\textmd{when}\ \ \ \abs{\xi}\neq 0,\\
&0, \quad\ \ \ \ \
\textmd{when}\ \ \ \abs{\xi}= 0,
\end{split}\right.$$
it is straightforward to verify that
$\norm{\sqrt{-\Delta}^{-1}f}_{H^{\nu+1}}  \lesssim  \norm{f}_{H^{\nu}}.$
 \end{itemize}

For the operator $\mathcal{A}$  introduced in \eqref{notations A}, the following estimates are consequences of the properties stated above.
\begin{myprop}\label{lamma00}   If   $f\in H^{\nu}$ for any $\nu\geq 0$,  then the following results hold
$$\norm{\cos(t\sqrt{\mathcal{A}}) f}_{H^{\nu}}\leq \norm{f}_{H^{\nu}},\  \norm{\sin(t\sqrt{\mathcal{A}})  f}_{H^{\nu}}\leq \norm{f}_{H^{\nu}},$$
and $$ \norm{\sinc( t\sqrt{\mathcal{A}}) f}_{H^{\nu}}\leq \norm{f}_{H^{\nu}},\ \ \norm{\sqrt{\mathcal{A}}^{-1}f}_{H^{\nu+1}}\lesssim \norm{f}_{H^{\nu}}.$$

\end{myprop}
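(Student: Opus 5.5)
The plan is to diagonalize everything in Fourier space. Then each inequality reduces to a pointwise bound on the symbol multiplying $\widehat{f}(\xi)$, and Parseval (the definition of $\norm{\cdot}_{H^\nu}$) does the rest. The operator $\mathcal{A}=-\Delta+\rho$ acts on $e^{ix\cdot\xi}$ as multiplication by $\lambda(\xi):=\abs{\xi}^2+\rho\geq 0$. Hence for any bounded function $g$ we have
$$
\widehat{g(t\sqrt{\mathcal{A}})f}(\xi)=g\big(t\sqrt{\lambda(\xi)}\big)\widehat{f}(\xi),
\qquad
\norm{g(t\sqrt{\mathcal{A}})f}_{H^\nu}^2=(b-a)^d\sum_{\xi}(1+\abs{\xi}^2)^{\nu}\abs{g(t\sqrt{\lambda(\xi)})}^2\abs{\widehat f(\xi)}^2 .
$$
This is the multiplier property listed before the proposition, applied with $m=0$, but tracking the constant exactly.

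For the first three estimates I will use the elementary bounds $\abs{\cos s}\leq 1$, $\abs{\sin s}\leq 1$ and $\abs{\sinc(s)}\leq 1$ for all real $s$. The last one follows from $\abs{\sin s}\leq \abs{s}$, with $\sinc(0)=1$ by continuity. These bounds give the inequalities with constant exactly $1$. Here $t$ is real and $\sqrt{\lambda(\xi)}$ is real since $\rho\geq0$. No loss of derivatives occurs, which is why $\nu\geq 0$ is arbitrary.

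For the inverse estimate, the symbol is $\lambda(\xi)^{-1/2}$. I need
$$
(1+\abs{\xi}^2)^{\nu+1}\lambda(\xi)^{-1}\lesssim(1+\abs{\xi}^2)^{\nu},
\qquad\text{i.e.}\qquad
\frac{1+\abs{\xi}^2}{\abs{\xi}^2+\rho}\leq C .
$$
If $\rho>0$, this holds with $C=\max(1,1/\rho)$. If $\rho=0$, then $\lambda(0)=0$ and $\sqrt{\mathcal{A}}^{-1}$ must be interpreted as $\sqrt{-\Delta}^{-1}$ from the preceding bullet, which annihilates the zero mode. For $\xi\neq0$ we have $\abs{\xi}\geq1$, so $(1+\abs{\xi}^2)/\abs{\xi}^2\leq 2$. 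This is exactly the stated property of $\sqrt{-\Delta}^{-1}$.

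The only real subtlety, and the main obstacle, is this zero-frequency issue in the massless case $\rho=0$. There $\sqrt{\mathcal{A}}^{-1}$ is not literally defined on the mean. I will note the convention above, which is consistent with the paper's definition of $\sqrt{-\Delta}^{-1}$. I will also note that in the scheme $\sqrt{\mathcal{A}}^{-1}$ only appears in combinations like $\sqrt{\mathcal{A}}^{-1}\sin(t\sqrt{\mathcal{A}})=t\sinc(t\sqrt{\mathcal{A}})$, whose symbol is continuous at $\lambda=0$. Everything else is summation over $\xi$ and is routine.
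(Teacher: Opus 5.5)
Your proposal is correct and follows the same route as the paper: the paper calls the first three bounds obvious and checks the inverse estimate by a direct symbol computation in Fourier space, which is exactly your argument. Your handling of the inverse is in fact more careful than the paper's. The paper's claimed constant $2$ for $\rho<1$ fails on the zero mode once $\rho\le 1/4$, where the symbol ratio is $\rho^{-1/2}$, and it is undefined for $\rho=0$; your bound $\max(1,1/\rho)$ on the squared ratio, together with the $\sqrt{-\Delta}^{-1}$ convention when $\rho=0$, is the right statement.
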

\begin{proof}The first three inequalities are obvious. For the last one, a direct calculation shows that $\norm{\sqrt{\mathcal{A}}^{-1}f}_{H^{\nu+1}}=\norm{f}_{H^{\nu}}$ if $ \rho=1$,  $\norm{\sqrt{\mathcal{A}}^{-1}f}_{H^{\nu+1}}< \norm{f}_{H^{\nu}}$ if $ \rho>1$, and $\norm{\sqrt{\mathcal{A}}^{-1}f}_{H^{\nu+1}}< 2 \norm{f}_{H^{\nu}}$ if $ \rho<1$. This completes the proof.
\end{proof}

The following version of the Kato–Ponce inequalities will also be needed in this paper, which was originally given in   \cite{Kato88}.

\begin{mylemma}\label{lamma0}(The Kato–Ponce inequalities)
In the regime $\nu> d/4$, we call upon the classical bilinear estimate
$$\norm{fg}_{H^{2 \nu}}\lesssim \norm{f}_{H^{2 \nu}}\norm{g}_{H^{2 \nu}},\ \ \norm{J^{-1}(Jfg)}_{H^{2 \nu}}\lesssim \norm{f}_{H^{2 \nu}}\norm{g}_{H^{2 \nu}},$$
whereas in the regime $0\leq\nu\leq d/4$, the following   bilinear estimates are true
 \begin{equation}\label{KPI}
\begin{aligned}
&\norm{fg}_{H^{2 \nu}}\lesssim \norm{f}_{H^{ \frac{d}{4}+\nu}}\norm{g}_{H^{\frac{d}{4}+\nu}}\ \ \ \textmd{for}\ \ 0\leq\nu< d/4,\\
&\norm{fg}_{H^{2 \nu}}\lesssim \norm{f}_{H^{ \frac{d}{2}+\epsilon}}\norm{g}_{H^{2 \nu}}\ \ \ \ \  \textmd{for}\ \ 0\leq\nu\leq d/4,
\end{aligned}
\end{equation}
with any $\epsilon>0$.
 If $\nu>d/2$ and $f, g \in H^{\nu}$, we have the   inequality $\norm{J^{\nu}(fg)}_{L^{2}}\lesssim \norm{f}_{H^{\nu }}\norm{g}_{H^{\nu}}.$
 If $\delta \geq0, \nu> d/2$ and $f \in H^{\delta+\nu}$, $g \in H^{\delta}$, then
$\norm{J^{\delta}(fg)}_{L^{2}}\lesssim \norm{f}_{H^{\delta+\nu }}\norm{g}_{H^{\delta}}.$
\end{mylemma}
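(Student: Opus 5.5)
The plan is to argue entirely in Fourier variables, writing $\widehat{fg}(\xi)=\sum_{\eta+\zeta=\xi}\widehat f(\eta)\widehat g(\zeta)$. Each estimate then reduces to controlling a weighted convolution in $\ell^2(\mathbb{Z}^d)$. The basic tool is the elementary inequality
$$(1+\abs{\xi}^2)^{s/2}\lesssim (1+\abs{\eta}^2)^{s/2}+(1+\abs{\zeta}^2)^{s/2},\qquad \xi=\eta+\zeta,\ s\geq 0.$$
This splits the weight onto the higher-frequency factor. Combining it with Young's inequality $\norm{a*b}_{\ell^2}\leq \norm{a}_{\ell^1}\norm{b}_{\ell^2}$ and with the Cauchy--Schwarz bound $\sum_\xi (1+\abs{\xi}^2)^{-s}<\infty$ for $s>d/2$ gives the algebra-type statements.

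Concretely, for the last two claims (and the first claim when $2\nu>d/2$), I split the weight and put the weighted factor in $\ell^2$. The other factor goes in $\ell^1$, where $\norm{\widehat f}_{\ell^1}\lesssim \norm{f}_{H^{\nu}}$ for $\nu>d/2$. This yields $\norm{J^\delta(fg)}_{L^2}\lesssim \norm{f}_{H^{\delta+\nu}}\norm{g}_{H^\delta}+\norm{f}_{H^{\nu}}\norm{g}_{H^\delta}$. Note that in the piece where the weight falls on $f$ we need $\norm{J^\delta f}$ in $\ell^2$ and $\widehat g$ in $\ell^1$, which fails for $g\in H^\delta$ alone. I would instead place $\widehat{J^\delta f}$ in $\ell^1$ via $\norm{\widehat{J^\delta f}}_{\ell^1}\lesssim\norm{f}_{H^{\delta+\nu}}$, and $\widehat g$ in $\ell^2$ with $\abs{\widehat g}\leq \abs{\widehat{J^\delta g}}$. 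Both pieces are then bounded by $\norm{f}_{H^{\delta+\nu}}\norm{g}_{H^\delta}$. The same argument with $\delta=0$ gives the second inequality in \eqref{KPI} at $\nu=0$. For $0<\nu\leq d/4$ it gives that inequality with $\frac d2+\epsilon$ in place of $\nu$, since the smoothness index $\frac d2+\epsilon>d/2$ plays the role of $\nu$ there.

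The first inequality in \eqref{KPI}, with $0\leq\nu<d/4$, is the genuinely bilinear Sobolev case. I would prove it by duality and Hölder. Using the fractional Leibniz rule (Kato--Ponce in $L^p$), $\norm{J^{2\nu}(fg)}_{L^2}\lesssim \norm{J^{2\nu}f}_{L^{p}}\norm{g}_{L^{q}}+\norm{f}_{L^q}\norm{J^{2\nu}g}_{L^p}$ with $\frac1p+\frac1q=\frac12$. I then choose exponents via the Sobolev embeddings $H^{d/4+\nu}\hookrightarrow W^{2\nu,p}$ with $\frac1p=\frac12-\frac{d/4-\nu}{d}$ and $H^{d/4+\nu}\hookrightarrow L^q$ with $\frac1q=\frac12-\frac{d/4+\nu}{d}$. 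These sum to $\frac12$ exactly, which is why the indices $\frac d4+\nu$ appear. The $J^{-1}(Jf\,g)$ estimate for $\nu>d/4$ follows by the same frequency splitting: on the region $\abs{\eta}\lesssim\abs{\xi}$ the symbol $\langle\eta\rangle/\langle\xi\rangle$ is bounded. On the complementary region $\abs{\eta}\gg\abs{\xi}$ we have $\abs{\zeta}\sim\abs{\eta}$, so the extra derivative transfers to $g$, and the $H^{2\nu}$ algebra property ($2\nu>d/2$) closes the bound.

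I expect the main obstacle to be the endpoint and torus technicalities. These are the $L^p$ fractional Leibniz rule on $\mathbb{T}^d$ and the Sobolev embedding at the exact exponents, which requires $q<\infty$; this is where the strict inequality $\nu<d/4$ is used. Since the lemma is cited from \cite{Kato88}, I would rely on the periodic versions of these classical results rather than reprove them, and verify only the exponent bookkeeping and the frequency splitting above.
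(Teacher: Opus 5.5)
\textbf{Main gap: the second inequality in \eqref{KPI} for $0<\nu\le d/4$.} The paper gives no proof here (it only cites Kato--Ponce), so a Fourier-side argument is a legitimate route. However, your reduction does not prove this inequality. Feeding $\delta=2\nu$ and smoothness index $\frac d2+\epsilon$ into your last estimate yields
\[
\norm{fg}_{H^{2\nu}}\lesssim \norm{f}_{H^{2\nu+\frac d2+\epsilon}}\norm{g}_{H^{2\nu}},
\]
which costs $2\nu$ extra derivatives on $f$. The statement asks only for $f\in H^{\frac d2+\epsilon}$, independently of $\nu$. The loss comes from the piece where $\langle\xi\rangle^{2\nu}$ is moved onto $f$ and you put $\widehat{J^{2\nu}f}$ in $\ell^1$. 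This is not cosmetic, because the paper uses exactly the sharp form. For $d=2,3$ it bounds $\norm{\textmd{II}^u}_{H^1}\lesssim h^4\max_s\norm{f'(\alpha^{\intercal}(s)W(t_n))}_{H^1}\max_\zeta\norm{f(u(t_n+\zeta))}_{H^{\mu}}$ with $\mu=\frac d2+\epsilon$, which is the case $2\nu=1\le d/2$; your version would require $H^{1+\mu}$ there. A telltale sign is that your argument never uses the hypothesis $\nu\le d/4$.

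\textbf{The missing tool.} What you need is the convolution bound
\[
\sup_{\xi\in\mathbb{Z}^d}\sum_{\eta\in\mathbb{Z}^d}\langle\eta\rangle^{-2a}\langle\xi-\eta\rangle^{-2b}<\infty\qquad (a,b\ge 0,\ a+b>d/2),
\]
proved by splitting according to whether $\abs{\eta}$ or $\abs{\xi-\eta}$ is larger. Combined with Cauchy--Schwarz in $\eta$, it lets you \emph{distribute} regularity between the factors instead of putting a whole weighted factor in $\ell^1$. In the $f$-weighted piece, with $\zeta=\xi-\eta$, write $\langle\eta\rangle^{2\nu}\abs{\widehat f(\eta)}\abs{\widehat g(\zeta)}=\langle\eta\rangle^{-a}\langle\zeta\rangle^{-b}F(\eta)G(\zeta)$. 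Here $F=\langle\cdot\rangle^{\frac d2+\epsilon}\abs{\widehat f}$ and $G=\langle\cdot\rangle^{2\nu}\abs{\widehat g}$ lie in $\ell^2$, $a=\frac d2+\epsilon-2\nu$ and $b=2\nu$. Then $a\ge\epsilon$ precisely because $\nu\le d/4$, and $a+b>d/2$. Equivalently, use H\"older with $H^{\frac d2+\epsilon-2\nu}\hookrightarrow L^p$, $H^{2\nu}\hookrightarrow L^q$ and $\frac1p+\frac1q=\frac12$.

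\textbf{Secondary gap: the $J^{-1}(Jf\,g)$ estimate for $d=1$, $\frac14<\nu<\frac12$.} The same device is needed here. The output order $2\nu-1$ is then negative, so in the region $\abs{\eta}\gg\abs{\xi}$ you cannot bound $\langle\xi\rangle^{2\nu-1}\langle\eta\rangle$ by $\langle\eta\rangle^{2\nu}$. Simply discarding the weight via $\langle\xi\rangle^{2\nu-1}\le 1$ fails for $\nu<\frac38$. To see this, take $\widehat f=N^{-2\nu-\frac12}\mathbf{1}_{[N,2N]}$ and $\widehat g=N^{-2\nu-\frac12}\mathbf{1}_{[-2N,-N]}$. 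Both have $H^{2\nu}$ norm of order one, but the unweighted high--high$\to$low sum has $\ell^2$ norm $\sim N^{\frac32-4\nu}$. Keep the negative weight by duality instead:
\[
\norm{Jf\,g}_{H^{2\nu-1}}\lesssim\norm{f}_{H^{2\nu}}\sup_{\norm{w}_{H^{1-2\nu}}\le 1}\norm{\bar g w}_{H^{1-2\nu}}.
\]
Then bound $\norm{\bar g w}_{H^{1-2\nu}}\lesssim\norm{g}_{H^{2\nu}}\norm{w}_{H^{1-2\nu}}$ by the split weight, using the convolution bound with $a=4\nu-1$, $b=1-2\nu$ in the $g$-weighted piece.

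\textbf{What is fine.} The rest of your proposal holds up: the algebra estimate for $\nu>d/4$, the last two claims, and the $\nu=0$ case of the second inequality. So does the $L^p$ Leibniz/Sobolev proof of the first inequality in \eqref{KPI}, whose exponents you balanced correctly.
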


\subsection{Semi-discrete and fully-discrete  integrators}\label{sds}
We first directly present the scheme of semi-discrete integrator  and its deduction will be given in
Subsection \ref{sds-construct}.
\begin{mydef}\label{def-se} (\textbf{Semi-discrete  integrator})
 Let $t_n=n h $ with $n=0,1,\ldots, N$  be a uniform partition of the time interval $[0,T]$
with stepsize  $h=T/N$,  where $N$ is any given positive integer.
For solving the   Klein--Gordon
equation \eqref{klein-gordon}, we define the numerical solution $u_{n}=u_{n}(x)\approx u(t_n,x), v_{n}=v_{n}(x)\approx v(t_n,x):=\partial_tu(t_n,x)$ by the
following  semi-discrete  integrator
 \begin{equation}\label{method}
\begin{aligned}
u_{1}=&\cos(h \sqrt{\mathcal{A}}) u_0+h  \sinc( h \sqrt{\mathcal{A}})  v_0+h^2\Phi_1(h \sqrt{\mathcal{A}}) f( u_0)\\&+h^3  \Psi_1(h \sqrt{\mathcal{A}})f'( u_0)v_0 +h^4  \Psi_2(h \sqrt{\mathcal{A}}) F_1(u_0,v_0),\\
v_{1}=&-h \mathcal{A} \sinc( h \sqrt{\mathcal{A}}) u_0+\cos(h \sqrt{\mathcal{A}}) v_0+
h  \Phi_2(h \sqrt{\mathcal{A}})  f( u_0) \\&+h^2\Phi_1(h \sqrt{\mathcal{A}})f'( u_0)v_0 +h^3  \Psi_1(h \sqrt{\mathcal{A}}) \big(F_1(u_0,v_0)+f'(u_0) f(u_0) \big),\\
u_{n+1}=&\cos(2h \sqrt{\mathcal{A}}) u_{n-1}+2h  \sinc( 2h \sqrt{\mathcal{A}})  v_{n-1}\\
&+ h^2\Big(\big(1-\cos(2h \sqrt{\mathcal{A}})\big)\Phi_1(h \sqrt{\mathcal{A}})+2\mathcal{A} \sinc(2 h \sqrt{\mathcal{A}})  \Phi_2(h \sqrt{\mathcal{A}}) \Big)f( u_n)\\
&+h^3 \Big(\big(1+\cos(2h \sqrt{\mathcal{A}})\big) \Psi_1(h \sqrt{\mathcal{A}})-2 \mathcal{A} \sinc(2 h \sqrt{\mathcal{A}})  \Phi_1(h \sqrt{\mathcal{A}})\Big) f'( u_n)v_n \\
&+h^4\Big( \big(1-\cos(2h \sqrt{\mathcal{A}})\big) \Psi_2(h \sqrt{\mathcal{A}})+2 \mathcal{A} \sinc(2 h \sqrt{\mathcal{A}})    \Psi_1(h \sqrt{\mathcal{A}}) \Big)  F_1(u_n,v_n)\\
&+2h^4 \mathcal{A} \sinc(2 h \sqrt{\mathcal{A}})     \Psi_1(h \sqrt{\mathcal{A}})  f'(u_n) f(u_n),  \\
v_{n+1}
   =&-2h \mathcal{A} \sinc(2 h \sqrt{\mathcal{A}}) u_{n-1}+\cos(2h \sqrt{\mathcal{A}}) v_{n-1}\\
   &+h \Big(\big(1+\cos(2h \sqrt{\mathcal{A}})\big) \Phi_2(h \sqrt{\mathcal{A}}) +2h^2 \mathcal{A} \sinc(2 h \sqrt{\mathcal{A}})  \Phi_1(h \sqrt{\mathcal{A}})\Big)  f( u_n) \\
   &+h^2\Big(\big(1-\cos(2h \sqrt{\mathcal{A}})\big)\Phi_1(h \sqrt{\mathcal{A}})-2h^2 \mathcal{A} \sinc(2 h \sqrt{\mathcal{A}})    \Psi_1(h \sqrt{\mathcal{A}})\Big) f'( u_n)v_n  \\
   &+h^3\Big(\big(1+\cos(2h \sqrt{\mathcal{A}})\big)  \Psi_1(h \sqrt{\mathcal{A}}) +2h^2 \mathcal{A} \sinc(2 h \sqrt{\mathcal{A}})   \Psi_2(h \sqrt{\mathcal{A}})\Big)  F_1(u_n,v_n) \\
      &+h^3 \big(1+\cos(2h \sqrt{\mathcal{A}})\big)  \Psi_1(h \sqrt{\mathcal{A}})   f'(u_n) f(u_n), 
\end{aligned}
\end{equation}
where  $n=1,2,\ldots, T/h-1$, the initial value $u_0:=u_0(x), v_0:=v_0(x)$ is given in \eqref{klein-gordon}, the notation $F_1$ is
$$F_1(u_n,v_n)= f''(u_n)(v_n^2-(\nabla u_n)^2)+\rho  f(u_n)-\rho  f'(u_n)u_n,$$
 and  the coefficient functions of the integrator are defined by
 \begin{equation}\label{method-coefficient}
\begin{aligned}
 &\Phi_1(h \sqrt{\mathcal{A}})= \frac{\sinc( h \sqrt{\mathcal{A}})}{2},\ \ \  \ \ \Psi_2(h \sqrt{\mathcal{A}})= \frac{ 1-\cos( h \sqrt{\mathcal{A}})-\frac{h}{2} \sqrt{\mathcal{A}}\sin(h \sqrt{\mathcal{A}})}{(h \sqrt{\mathcal{A}})^4},\\
  &\Psi_1(h \sqrt{\mathcal{A}})= \frac{ \sinc( h \sqrt{\mathcal{A}})-\cos(h \sqrt{\mathcal{A}})}{2(h \sqrt{\mathcal{A}})^2},   \ \Phi_2(h \sqrt{\mathcal{A}})= \frac{ \cos(h \sqrt{\mathcal{A}}) + \sinc( h \sqrt{\mathcal{A}})}{2}.
\end{aligned}
\end{equation}
Here
the prime on $f$ indicates the   derivative  of $f(u)$ w.r.t. $u$.
 
\end{mydef}

\begin{remark}
From \eqref{method}, it can be observed clearly that this method is a kind of  trigonometric integrators (\cite{hairer2006,Lubich99}). The scheme \eqref{method} is fully explicit and  not complicated even in comparison with some first-order or second-order low-regularity integrators.  Therefore, it is convenient to be implemented in practical computations and has low computation cost. This aspect will be  demonstrated clearly by a numerical test given in Section \ref{sec:4}.
\end{remark}

In what follows, we consider the spatial discretization of \eqref{method}, which can be
handled by the Fourier spectral method \cite{Shen}. For conciseness, we assume that
$\mathbb{T}^d:=[0,1]^d$ is equipped with the periodic boundary condition. Then any function
$W\in H^1_{\mathrm{per}}(\mathbb{T}^d)\times L^2(\mathbb{T}^d)$ can be expanded into the
Fourier series
$$W=\sum_{\mathbf{k}\in\mathbb{Z}^d}\widehat{W}_{\mathbf{k}}\,e^{\,2\pi i\,\mathbf{k}\cdot x},
\qquad
\widehat{W}_{\mathbf{k}}=\int_{\mathbb{T}^d}W(x)\,e^{-2\pi i\,\mathbf{k}\cdot x}\,\mathrm{d}x,$$
where $\mathbf{k}=(k_1,\ldots,k_d)$ and
$\mathbf{k}\cdot x=k_1x_1+\cdots+k_dx_d$. Choose a positive integer $N_x$ and denote by
$I_{N_x}$ and $\Pi_{N_x}$ the Fourier interpolation and $L^2$-orthogonal projection
operators onto $S_{N_x}$, respectively, where
\begin{equation*}
\begin{aligned}
&S_{N_x}=\Big\{\sum_{\mathbf{k}\in\mathbb{K}_{N_x}}\widehat{W}_{\mathbf{k}}\,
e^{\,2\pi i\,\mathbf{k}\cdot x}:\ \widehat{W}_{\mathbf{k}}\in\mathbb{C}\Big\},\\
&\mathbb{K}_{N_x}=\big\{\mathbf{k}\in\mathbb{Z}^d:\ |k_j|\le N_x,\ j=1,\ldots,d\big\}.
\end{aligned}
\end{equation*}
Here, $I_{N_x}W$ is the unique function in $S_{N_x}$ satisfying
$I_{N_x}W(x_{\mathbf{j}})=W(x_{\mathbf{j}})$ at the equidistant grid points
$x_{\mathbf{j}}=(\tfrac{j_1}{2N_x+1},\ldots,\tfrac{j_d}{2N_x+1})$,
$0\le j_k\le 2N_x$, and $(\Pi_{N_x}W-W,\,V)=0$ for any $W\in L^2(\mathbb{T}^d)$
and $V\in S_{N_x}$. Based on the semi-discrete scheme in Definition \ref{def-se},
the fully discrete low-regularity integrator for the periodic Klein--Gordon equation
is defined as follows.

\begin{mydef}\label{def-ful} (\textbf{Fully discrete  integrator})
For the semi-discrete integrator \eqref{method},  the fully discrete low-regularity integrator
is given by
 \begin{equation}\label{method-f}
\begin{aligned}
U_{1}=&\cos(h \sqrt{\mathcal{A}}) U_0+h  \sinc( h \sqrt{\mathcal{A}})  V_0+h^2\Phi_1(h \sqrt{\mathcal{A}})I_{N_x} f( U_0)\\&+h^3  \Psi_1(h \sqrt{\mathcal{A}})I_{N_x} \big(f'( U_0)V_0\big)+h^4  \Psi_2(h \sqrt{\mathcal{A}}) I_{N_x}F_1(U_0,V_0),\\
V_{1}
   =&-h \mathcal{A} \sinc( h \sqrt{\mathcal{A}}) U_0+\cos(h \sqrt{\mathcal{A}}) V_0\\&+
h  \Phi_2(h \sqrt{\mathcal{A}}) I_{N_x} f( U_0) +h^2\Phi_1(h \sqrt{\mathcal{A}})I_{N_x}\big(f'(U_0)V_0\big)  \\&+h^3  \Psi_1(h \sqrt{\mathcal{A}}) I_{N_x}\big(F_1(U_0,V_0)+ f'(U_0)  f(U_0)\big),\\
U_{n+1}=&\cos(2h \sqrt{\mathcal{A}}) U_{n-1}+2h  \sinc( 2h \sqrt{\mathcal{A}})  V_{n-1}\\
&+ h^2\Big(\big(1-\cos(2h \sqrt{\mathcal{A}})\big)\Phi_1(h \sqrt{\mathcal{A}})+2\mathcal{A} \sinc(2 h \sqrt{\mathcal{A}})  \Phi_2(h \sqrt{\mathcal{A}}) \Big)I_{N_x}f( U_n)\\
&+h^3 \Big(\big(1+\cos(2h \sqrt{\mathcal{A}})\big) \Psi_1(h \sqrt{\mathcal{A}})-2 \mathcal{A} \sinc(2 h \sqrt{\mathcal{A}})  \Phi_1(h \sqrt{\mathcal{A}})\Big) 
I_{N_x} \big(f'( U_n)V_n\big)\\
&+h^4\Big( \big(1-\cos(2h \sqrt{\mathcal{A}})\big) \Psi_2(h \sqrt{\mathcal{A}})+2 \mathcal{A} \sinc(2 h \sqrt{\mathcal{A}})    \Psi_1(h \sqrt{\mathcal{A}}) \Big)  I_{N_x}F_1(U_n,V_n)\\
&+2h^4 \mathcal{A} \sinc(2 h \sqrt{\mathcal{A}})     \Psi_1(h \sqrt{\mathcal{A}})  I_{N_x} \big(f'( U_n)f( U_n)\big),  \\
V_{n+1}
   =&-2h \mathcal{A} \sinc(2 h \sqrt{\mathcal{A}}) U_{n-1}+\cos(2h \sqrt{\mathcal{A}}) V_{n-1}\\
   &+h \Big(\big(1+\cos(2h \sqrt{\mathcal{A}})\big) \Phi_2(h \sqrt{\mathcal{A}}) +2h^2 \mathcal{A} \sinc(2 h \sqrt{\mathcal{A}})  \Phi_1(h \sqrt{\mathcal{A}})\Big) 
    I_{N_x}f( U_n) \\
   &+h^2\Big(\big(1-\cos(2h \sqrt{\mathcal{A}})\big)\Phi_1(h \sqrt{\mathcal{A}})-2h^2 \mathcal{A} \sinc(2 h \sqrt{\mathcal{A}})    \Psi_1(h \sqrt{\mathcal{A}})\Big) I_{N_x} \big(f'( U_n)V_n\big)\\
   &+h^3\Big(\big(1+\cos(2h \sqrt{\mathcal{A}})\big)  \Psi_1(h \sqrt{\mathcal{A}}) +2h^2 \mathcal{A} \sinc(2 h \sqrt{\mathcal{A}})   \Psi_2(h \sqrt{\mathcal{A}})\Big) 
   I_{N_x} F_1(U_n,V_n) \\
      &+h^3 \big(1+\cos(2h \sqrt{\mathcal{A}})\big)  \Psi_1(h \sqrt{\mathcal{A}})   I_{N_x} \big(f'( U_n)f( U_n)\big), 
\end{aligned}
\end{equation}
where  $0<h <1$ is the time stepsize, $n=1,2,\ldots, T/h-1$ and the initial values are chosen as $U_0=\Pi_{N_x} u(x)$ and  $V_0=\Pi_{N_x} v(x)$ for
  $x\in \{\frac{2n}{2N_x+1}: n=1,2,\ldots,N_x \}^d$. In practical computation,  the trigonometric interpolation $I_{N_x}$ can be implemented with Fast Fourier Transform (FFT).
\end{mydef}

\subsection{Construction of semi-discrete integrator}\label{sds-construct}
In this subsection, we present the construction of the semi-discrete numerical method \eqref{method} based on twisted functions and
Duhamel’s formula.  For readers’ convenience,    the $x$-dependence of the unknown
functions is omitted and some technical
estimates of remainders are deferred to   Section \ref{sec:3}, where a rigorous error analysis will be given.

 For   the nonlinear Klein--Gordon
equation \eqref{klein-gordon},  the Duhamel’s formula at $t=t_n+s$ with $s\in \mathbb{R}$ reads
\begin{equation}\label{Duhamel}
\begin{aligned}
u(t_n+s)=&\cos(s\sqrt{\mathcal{A}}) u(t_n)+s \sinc( s\sqrt{\mathcal{A}})  v(t_n)\\&+\int_0^{s } (s-\theta)\sinc( (s-\theta)\sqrt{\mathcal{A}}) f\big( u(t_n+\theta)\big)d\theta,\\
v(t_n+s )=&-s\mathcal{A} \sinc( s\sqrt{\mathcal{A}}) u(t_n)+\cos(s\sqrt{\mathcal{A}})  v(t_n)\\&+\int_0^{s }  \cos( (s-\theta)\sqrt{\mathcal{A}}) f\big( u(t_n+\theta)\big)d\theta,
\end{aligned}
\end{equation}
which plays a crucial role in the method's formulation. The key procedures for the algorithm construction are outlined as follows.
\begin{itemize}
  \item \textbf{Step 1.} Using this formula, we insert the expression of $u(t_n+\theta)$  into  the trigonometric integrals on the right hand side of \eqref{Duhamel} and deduce an integral formulation of $f\big(u(t_n+ s)\big)$. 
  \item \textbf{Step 2.} Then by carefully selecting
the tractable terms from the obtained formulae and dropping some parts which do not affect third-order accuracy and regularity, a transitional third-order   semi-discrete scheme is formulated. 
  \item \textbf{Step 3.} Finally, based on this  transitional scheme and a symmetrization procedure, we upgrade the scheme to fourth order while retaining the low-regularity requirements.
\end{itemize}

\subsubsection{Step 1. An integral formulation of $f\big(u(t_n+ s)\big)$}

Firstly, we get the expression of $u(t_n+s)$ by the first equation of \eqref{Duhamel} and then insert this into
$f\big(u(t_n+ s)\big)$. This gives
\begin{equation}\label{funs}
\begin{aligned}
f\big(u(t_n+ s)\big)=&f\Big(\alpha^{\intercal}(s)W(t_n)\Big)+f'\Big(\alpha^{\intercal}(s)W(t_n)\Big)
\Big(u(t_n+ s)-\alpha^{\intercal}(s)W(t_n)\Big)\\
&+R_{f''}(t_n, s)\Big(u(t_n+ s)-\alpha^{\intercal}(s)W(t_n)\Big)^2,
\end{aligned}
\end{equation}
where $W(t_n)=(
             u(t_n),
             v(t_n) )^{\intercal}$   and \begin{equation}\label{RF2}\begin{aligned}&R_{f''}
=\int_0^1 \int_0^1\theta f''\Big( (1-\xi)\alpha^{\intercal}(s)W(t_n)+
\xi(1-\theta) \alpha^{\intercal}(s)W(t_n)+\theta u(t_n+ s)\Big)d\xi d\theta.\end{aligned}\end{equation}
Using the expression of $u(t_n+s)$ again in the right hand side of \eqref{funs}   yields the following expression:
\begin{equation}\label{funs-new}
\begin{aligned}
&f\big(u(t_n+ s)\big)\\
=&f\Big(\alpha^{\intercal}(s)W(t_n)\Big)
+f'\Big(\alpha^{\intercal}(s)W(t_n)\Big)
\int_0^{s }  \frac{\sin( ( s-\theta)\sqrt{\mathcal{A}})}{\sqrt{\mathcal{A}}} f\big( u(t_n+\theta)\big)d\theta \\
&+R_{f''}(t_n, s) \Big(\int_0^{s }  \frac{\sin( ( s-\theta)\sqrt{\mathcal{A}})}{\sqrt{\mathcal{A}}} f\big( u(t_n+\theta)\big)d\theta\Big)^2.
\end{aligned}
\end{equation}

Now we obtained a   desired form of $f\big(u(t_n+ s)\big)$ and based on which, the two trigonometric integrals  on the right hand side of \eqref{Duhamel} can be reformulated as
 \begin{equation}\label{local errors 1-new}
\begin{aligned}
&\int_0^{h }  \frac{\sin( (h -s)\sqrt{\mathcal{A}}) }{\sqrt{\mathcal{A}}} f\big( u(t_n+s)\big)ds
=\underbrace{\int_0^{h  }  \frac{\sin( (h -s)\sqrt{\mathcal{A}}) }{\sqrt{\mathcal{A}}}f\Big(\alpha^{\intercal}(s)W(t_n)\Big)ds}_{=:\textmd{  I}^{u}}\\
&+\underbrace{\int_0^{h  } \frac{\sin( (h -s)\sqrt{\mathcal{A}}) }{\sqrt{\mathcal{A}}}f'\Big(\alpha^{\intercal}(s)W(t_n)\Big) \int_0^{s }  \frac{\sin( ( s-\theta)\sqrt{\mathcal{A}})}{\sqrt{\mathcal{A}}} f\big( u(t_n+\theta)\big)d\theta ds}_{=:\textmd{  II}^{u}}
\\
&+\underbrace{\int_0^{h  }  \frac{\sin( (h -s)\sqrt{\mathcal{A}}) }{\sqrt{\mathcal{A}}}R_{f''}(t_n, s) \Big(\int_0^{s }  \frac{\sin( ( s-\theta)\sqrt{\mathcal{A}})}{\sqrt{\mathcal{A}}} f\big( u(t_n+\theta)\big)d\theta \Big)^2 ds}_{=:\textmd{  III}^{u}},
\end{aligned}
\end{equation}
and
 \begin{equation}\label{local errorsv 1-new}
\begin{aligned}
&\int_0^{h } \cos( (h-\theta)\sqrt{\mathcal{A}}) f\big( u(t_n+\theta)\big)d\theta=\underbrace{\int_0^{h  } \cos( (h -s)\sqrt{\mathcal{A}}) f\Big(\alpha^{\intercal}(s)W(t_n)\Big)ds}_{=:\textmd{  I}^{v}}\\
&+\underbrace{\int_0^{h  }\cos( (h -s)\sqrt{\mathcal{A}})f'\Big(\alpha^{\intercal}(s)W(t_n)\Big) \int_0^{s }  \frac{\sin( ( s-\theta)\sqrt{\mathcal{A}})}{\sqrt{\mathcal{A}}} f\big( u(t_n+\theta)\big)d\theta ds}_{=:\textmd{  II}^{v}}
\\
&+\underbrace{\int_0^{h  } \cos( (h -s)\sqrt{\mathcal{A}}) R_{f''}(t_n, s) \Big(\int_0^{s }  \frac{\sin( ( s-\theta)\sqrt{\mathcal{A}})}{\sqrt{\mathcal{A}}} f\big( u(t_n+\theta)\big)d\theta \Big)^2 ds}_{=:\textmd{  III}^{v}}.
\end{aligned}
\end{equation}

The main objective is to find some computable third-order approximations of I$^u$\&I$^v,$   II$^u$\&II$^v,$ and   III$^u$\&III$^v$, which will be derived one by one in the next subsection.

\subsubsection{Step 2. Transitional third-order scheme}
In this part, we proceed to construct a transitional third-order scheme based on the approximations of I$^u$\&I$^v,$   II$^u$\&II$^v,$ and   III$^u$\&III$^v$ derived blew.

 $\bullet$ \textbf{Approximation to   I$^u$ and   I$^v$.}

 We first deal with the integral I$^u$. To this end, consider the
 twisted variable  which  was  widely  used in the development
of low-regularity time discretizations   \cite{LI22,LI23,O18,O21,O22,R21,S21,Zhao23}.
In this paper, we  introduce a new twisted  function $$F(t_n+s):=\beta(-s) f\big( \alpha^{\intercal}(s) W(t_n)\big).$$
After splitting the term $$\frac{\sin( (h -s)\sqrt{\mathcal{A}}) }{\sqrt{\mathcal{A}}}=\alpha^{\intercal}(h ) \beta(-s),$$
I$^u$   can be expressed by the twisted  function $F(t_n+s):$
 \begin{equation*}
\begin{aligned}
  \textmd{  I}^{u}
  =&\int_0^{h  }  \alpha^{\intercal}(h ) \beta(-s) f\big( \alpha^{\intercal}(s) W(t_n)\big) ds=\int_0^{h  }  \alpha^{\intercal}(h ) F(t_n+s) ds.
\end{aligned}
\end{equation*}
Considering  the Newton–Leibniz
formula for the twisted  function $F$:
$$F(t_n+s)=F(t_n)+ \int_0^{s }F'(t_n+\zeta) d\zeta,$$
we get
 \begin{equation}\label{P1-new1}
\begin{aligned}
  \textmd{  I}^{u}=&\int_0^{h  }  \alpha^{\intercal}(h ) F(t_n) ds+\int_0^{h  }  \alpha^{\intercal}(h ) \int_0^{s }F'(t_n+\zeta) d\zeta ds\\
 =&\int_0^{h  }  \alpha^{\intercal}(h ) F(t_n) ds+\int_0^{h  }  \alpha^{\intercal}(h )
 F'(t_n+\zeta)(h -\zeta) d\zeta.
\end{aligned}
\end{equation}
According to the scheme of $\alpha(h )$ given in
\eqref{notations w},  direct calculation yields the following expression
$$
  \alpha^{\intercal}(h )=\alpha^{\intercal}(h -s) M(s)\  \textmd{with}\ M(s):=\left(
                                 \begin{array}{cc}
                                  \cos(s\sqrt{\mathcal{A}}) &  s \sinc( s\sqrt{\mathcal{A}}) \\
                                 - \sqrt{\mathcal{A}} \sin( s\sqrt{\mathcal{A}}) &  \cos(s\sqrt{\mathcal{A}})  \\
                                 \end{array}
                               \right)
$$
 for any $s\in\mathbb{R}.$
Then
\eqref{P1-new1} can be expressed as
 \begin{equation*}
\begin{aligned}
\textmd{  I}^{u}=&\int_0^{h  }  \alpha^{\intercal}(h ) F(t_n) ds+\int_0^{h  }  (h -s) \alpha^{\intercal}(h -2s)
 M(2s)F'(t_n+s) ds\\
 =&\int_0^{h  }  \alpha^{\intercal}(h ) F(t_n) ds+\int_0^{h  }  (h -s) \alpha^{\intercal}(h -2s)
 M(0)F'(t_n) ds\\
 &+\int_0^{h  }  (h -s) \alpha^{\intercal}(h -2s)\int_0^{s}
 \frac{d  \big(M(2\zeta)F'(t_n+\zeta)\big)}{d\zeta}  d\zeta ds.
\end{aligned}
\end{equation*}
For the term $F'(t_n+\zeta)$ appeared above, it can be computed as:
 \begin{equation}\label{DF}
\begin{aligned}
&F'(t_n+\zeta)\\=&
 \frac{d}{d\zeta} \big( \beta(-\zeta)\big) f\big( \alpha^{\intercal}(\zeta) W(t_n)\big)+
            \beta(-\zeta)\frac{d}{d\zeta}  f\big( \alpha^{\intercal}(\zeta) W(t_n)  \\
            =&  \left(
              \begin{array}{c}
                - \cos(\zeta\sqrt{\mathcal{A}}) f\big( \alpha^{\intercal}(\zeta) W(t_n)\big) -\zeta \sinc(\zeta\sqrt{\mathcal{A}})\frac{d}{d\zeta}  f\big( \alpha^{\intercal}(\zeta) W(t_n)\big)   \\
             -\sqrt{\mathcal{A}}\sin(\zeta\sqrt{\mathcal{A}})f\big( \alpha^{\intercal}(\zeta) W(t_n)\big)+ \cos(\zeta\sqrt{\mathcal{A}}) \frac{d}{d\zeta}  f\big( \alpha^{\intercal}(\zeta) W(t_n)\big) \\
              \end{array}
            \right)\\
        =&M(-\zeta)\Big(  - f\big( \alpha^{\intercal}(\zeta) W(t_n)\big),
                                   \frac{d}{d\zeta}  f\big( \alpha^{\intercal}(\zeta) W(t_n)\big)
                              \Big)^{\intercal}.
\end{aligned}
\end{equation}
This result further leads to
 \begin{equation*}
\begin{aligned}
 &  \frac{d  }{d\zeta} M(2\zeta)F'(t_n+\zeta) =\frac{d  }{d\zeta} M(\zeta)\left(
                                \begin{array}{c}
                                  - f\big( \alpha^{\intercal}(\zeta) W(t_n)\big) \\
                                   \frac{d}{d\zeta}  f\big( \alpha^{\intercal}(\zeta) W(t_n)\big)  \\
                                \end{array}
                              \right)
                               \\=&\left(
                                 \begin{array}{cc}
                                 -\sqrt{\mathcal{A}} \sin(\zeta\sqrt{\mathcal{A}}) &  \cos( \zeta\sqrt{\mathcal{A}}) \\
                                 - \mathcal{A} \cos( \zeta\sqrt{\mathcal{A}}) &  -\sqrt{\mathcal{A}} \sin(\zeta\sqrt{\mathcal{A}}) \\
                                 \end{array}
                               \right)\left(
                                \begin{array}{c}
                                  - f\big( \alpha^{\intercal}(\zeta) W(t_n)\big) \\
                                   \frac{d}{d\zeta}  f\big( \alpha^{\intercal}(\zeta) W(t_n)\big)  \\
                                \end{array}
                              \right)\\&\ \ +M(\zeta)  \left(
                                \begin{array}{c}
                                  - \frac{d  }{d\zeta}f\big( \alpha^{\intercal}(\zeta) W(t_n)\big) \\
                                   \frac{d^2}{d\zeta^2}  f\big( \alpha^{\intercal}(\zeta) W(t_n)\big)  \\
                                \end{array}
                              \right)
\\
                              =&\left(
                                \begin{array}{c}
                                 \sqrt{\mathcal{A}} \sin(\zeta\sqrt{\mathcal{A}})f\big( \alpha^{\intercal}(\zeta) W(t_n)\big)
                                 +\zeta \sinc( \zeta\sqrt{\mathcal{A}}) \frac{d^2}{d\zeta^2}  f\big( \alpha^{\intercal}(\zeta) W(t_n)\big)   \\
                                  \mathcal{A} \cos(\zeta\sqrt{\mathcal{A}}) f\big( \alpha^{\intercal}(\zeta) W(t_n)\big) +\cos(\zeta\sqrt{\mathcal{A}}) \frac{d^2}{d\zeta^2}  f\big( \alpha^{\intercal}(\zeta) W(t_n)\big)   \\
                                \end{array}
                              \right)
                             \\=&\beta(\zeta)\Upsilon(t_n,\zeta),
\end{aligned}
\end{equation*}
with the notation \begin{equation}\label{UPS}
\begin{aligned}\Upsilon(t_n,\zeta):=&   \mathcal{A} f\big( \alpha^{\intercal}(\zeta) W(t_n)\big)
                                 +\frac{d^2}{d\zeta^2}  f\big( \alpha^{\intercal}(\zeta) W(t_n)\big) \\
                                 =&   \mathcal{A} f\big( \alpha^{\intercal}(\zeta) W(t_n)\big)
                                 +   f''\big( \alpha^{\intercal}(\zeta) W(t_n)\big)\big(\gamma^{\intercal}(-\zeta) W(t_n),\gamma^{\intercal}(-\zeta) W(t_n) \big)\\&- f'\big( \alpha^{\intercal}(\zeta) W(t_n)\big)\mathcal{A}\alpha^{\intercal}(\zeta) W(t_n)\\
                                     =&f''\big( \alpha^{\intercal}(\zeta) W(t_n)\big)\big(\gamma^{\intercal}(-\zeta) W(t_n),\gamma^{\intercal}(-\zeta) W(t_n) \big)\\&-  f''\big( \alpha^{\intercal}(\zeta) W(t_n)\big)\big(\nabla \alpha^{\intercal}(\zeta) W(t_n),\nabla \alpha^{\intercal}(\zeta) W(t_n) \big)\\
                                     &+\rho  f\big( \alpha^{\intercal}(\zeta) W(t_n)\big)-\rho  f'\big( \alpha^{\intercal}(\zeta) W(t_n)\big)\alpha^{\intercal}(\zeta) W(t_n). \end{aligned}
\end{equation}
Here in the last equation, we use the definition of  $\mathcal{A}$  \eqref{notations A} and split it into $-\Delta $ and  $\rho$.
It is noted that the result of $\Upsilon(t_n,\zeta)$ is the key point  for reducing the regularity since it changes $ \mathcal{A}$ into $\nabla $ in the expression. This good aspect comes from the careful treatment of the twisted  function $F$ and the splitting of $\alpha^{\intercal}(h )$.

With the above results and the splitting of $\Upsilon(t_n,\zeta)$, we  derive that
\begin{equation*}
\begin{aligned}
\textmd{  I}^{u}=&\int_0^{h  }  \alpha^{\intercal}(h )  ds F(t_n) +\int_0^{h  }  (h -s) \alpha^{\intercal}(h -2s)
 M(0)ds F'(t_n) \\
 &+\int_0^{h  }  (h -s) \alpha^{\intercal}(h -2s)\int_0^{s}
\beta(\zeta)\Upsilon(t_n,\zeta)  d\zeta ds\\
 =&h ^2\sinc(  h \sqrt{\mathcal{A}}) f\big( u(t_{n})\big)  + \left(
                                                                   \begin{array}{c}
                                                                      h ^2/2\sinc(  h \sqrt{\mathcal{A}})  \\
                                                                       \frac{ \sin (  h \sqrt{\mathcal{A}}) -  h \sqrt{\mathcal{A}}\cos(  h \sqrt{\mathcal{A}})}{2 \sqrt{\mathcal{A}}^3}\\
                                                                   \end{array}
                                                                 \right)^{\intercal}
 \left(
                                \begin{array}{c}
                                  - f\big( u(t_n)\big) \\
                                  f'\big( u(t_n)\big) v(t_n)  \\
                                \end{array}
                              \right) \\
 &+\int_0^{h  }  (h -s) \alpha^{\intercal}(h -2s)\int_0^{s}
\beta(\zeta)(\Upsilon(t_n,0)+\Upsilon(t_n,\zeta)-\Upsilon(t_n,0))  d\zeta ds\\
                              =&h ^2/2\sinc(  h \sqrt{\mathcal{A}}) f\big( u(t_{n})\big)+\frac{ \sin (  h \sqrt{\mathcal{A}}) -  h \sqrt{\mathcal{A}}\cos(  h \sqrt{\mathcal{A}})}{2 \sqrt{\mathcal{A}}^3} f'\big( u(t_n)\big) v(t_n) \\
 &+\frac{ 2-2\cos( h \sqrt{\mathcal{A}})-h \sqrt{\mathcal{A}}\sin(h \sqrt{\mathcal{A}})}{2   \mathcal{A}^2} \Upsilon(t_n,0)+R_1(t_n),\end{aligned}
\end{equation*}
where  the remainder $R_1$ is defined as
\begin{equation}\label{R1}R_1(t_n)=\int_0^{h  }  (h -s) \alpha^{\intercal}(h -2s)\int_0^{s}
\beta(\zeta)(\Upsilon(t_n,\zeta)-\Upsilon(t_n,0))  d\zeta ds,\end{equation}
and its boundedness will be shown in Lemma \ref{lamma1} of next section.
Based on the expression   \eqref{UPS}  of $\Upsilon$, it is trivial to get
 $$\Upsilon(t_n,0)=f''(u_n)\big(v(t_n)^2-(\nabla u(t_n))^2\big)+\rho  f(u(t_n))-\rho  f'(u(t_n))u(t_n).$$

In a very similar way, one deduces that \begin{equation}\label{  Iv bound2}
\begin{aligned}
  \textmd{  I}^{v}=&\int_0^{h  } \gamma^{\intercal}(-h ) \beta(-s) f\big( \alpha^{\intercal}(s) W(t_n)\big) ds\\
 =&\int_0^{h  } \gamma^{\intercal}(-h ) F(t_n) ds+\int_0^{h  }  \gamma^{\intercal}(-h )
 F'(t_n+\zeta)(h -\zeta) d\zeta\\
 =&\int_0^{h  }  \gamma^{\intercal}(-h ) ds F(t_n) +\int_0^{h  }  (h -s)  \gamma^{\intercal}(2s-h)
 M(0)ds F'(t_n) \\
 &+\int_0^{h  }  (h -s) \gamma^{\intercal}(2s-h)\int_0^{s}
\beta(\zeta)\Upsilon(t_n,\zeta)  d\zeta ds\\
                              =&h ( \cos(h \sqrt{\mathcal{A}}) + \sinc( h \sqrt{\mathcal{A}}) )/2 f\big( u(t_{n})\big)+h ^2/2\sinc(  h \sqrt{\mathcal{A}}) f'\big( u(t_n)\big) v(t_n) \\
 &+h^3\frac{ \sinc( h \sqrt{\mathcal{A}})-\cos(h \sqrt{\mathcal{A}})}{2(h \sqrt{\mathcal{A}})^2} \Upsilon(t_n,0)+R_2(t_n)
\end{aligned}
\end{equation}
with the remainder \begin{equation}\label{R2}R_2(t_n)=\int_0^{h  }  (h -s)  \gamma^{\intercal}(2s-h)\int_0^{s}
\beta(\zeta)(\Upsilon(t_n,\zeta)-\Upsilon(t_n,0))  d\zeta ds.\end{equation}
The estimate of this remainder will be given in  Lemma \ref{lamma2}  of next section.

$\bullet$ \textbf{Approximation to   II$^{u}$ and   II$^{v}$.}

Now we turn to   II$^{u}$ and   II$^{v}$.
For the first one, third-order local error  will be derived as follows.
In this paper, it is assumed that  the  nonlinear function $f$ of  \eqref{klein-gordon} satisfies   $\abs{f^{(k)}(w)}\leq C_0$  for $w\in \mathbb{R}$ and $k=1,2,3$. Moreover, we consider   the  regularity condition  $$(u(0,x), \partial_tu(0,x))\in H^{1+ \mu}(\mathbb{T}^d) \times H^{ \mu}(\mathbb{T}^d)$$
with $	
				\mu=
				\begin{cases}
					1, & d=1,\\
					d/2+\epsilon, & d=2,3,
				\end{cases}$ and $\epsilon>0$.
From
Proposition \ref{lamma00} and Lemma
\ref{lamma0}, it follows that for $d=1$
   \begin{equation}\label{  II bound}
\begin{aligned}
&\norm{\textmd{  II}^{u}}_{H^{1}}\\
\lesssim& \norm{ \int_0^{h  } \frac{ \sin( (h -s)\sqrt{\mathcal{A}})}{\sqrt{\mathcal{A}}} f'\Big(\alpha^{\intercal}(s)W(t_n)\Big) \int_0^{s }  \frac{\sin( ( s-\theta)\sqrt{\mathcal{A}})}{\sqrt{\mathcal{A}}} f\big( u(t_n+\theta)\big)d\theta ds} _{H^{1}}\\
\lesssim &  \int_0^{h  }\abs{h -s}
 \int_0^{  s }\norm{ (  s-\theta)\sinc( (s-\theta)\sqrt{\mathcal{A}}) f'\Big(\alpha^{\intercal}(s)W(t_n)\Big) f\big( u(t_n+\theta)\big)} _{H^{1}}d\theta ds\\
 \lesssim &  h ^4  \max_{s\in[0 ,h ]} \norm{f'\Big(\alpha^{\intercal}(s)W(t_n)\Big)} _{H^{1}} \max_{\zeta\in[0 ,h ]} \norm{f\big( u(t_n+\zeta)\big)} _{H^{1}} \lesssim   h ^4.
\end{aligned}
\end{equation}
 When $d=2$ or $3$, the  Kato–Ponce inequality \eqref{KPI} leads to
$$\norm{\textmd{  II}^{u}}_{H^{1}}
\lesssim  h ^4  \max_{s\in[0 ,h ]} \norm{f'\Big(\alpha^{\intercal}(s)W(t_n)\Big)} _{H^{1}} \max_{\zeta\in[0 ,h ]} \norm{f\big( u(t_n+\zeta)\big)} _{H^{\mu}} \lesssim   h ^4.
$$
However, for  the part $\textmd{  II}^{v}$, we only get the following estimate
   \begin{equation}\label{  IIv app}
\begin{aligned}
 \textmd{  II}^{v}=&\int_0^{h  }\cos( (h -s)\sqrt{\mathcal{A}})f'\Big(\alpha^{\intercal}(s)W(t_n)\Big) \int_0^{s }  \frac{\sin( ( s-\theta)\sqrt{\mathcal{A}})}{\sqrt{\mathcal{A}}} f\big( u(t_n+\theta)\big)d\theta ds\\
=& \int_0^{h  }\cos( (h -s)\sqrt{\mathcal{A}})f'(u(t_n)) \int_0^{s }  \frac{\sin( ( s-\theta)\sqrt{\mathcal{A}})}{\sqrt{\mathcal{A}}}d\theta ds  f\big( u(t_n)\big)+R_3(t_n)\\
=& h^3\frac{ \sinc( h \sqrt{\mathcal{A}})-\cos(h \sqrt{\mathcal{A}})}{2(h \sqrt{\mathcal{A}})^2} f'(u(t_n))  f\big( u(t_n)\big)+R_3(t_n),
\end{aligned}
\end{equation}
where $R_3$ is a remainder defined by
   \begin{equation}\label{R3}
\begin{aligned}
&R_3(t_n)\\
=&\int_0^{h  }\cos( (h -s)\sqrt{\mathcal{A}})f'\Big(\alpha^{\intercal}(s)W(t_n)\Big) \int_0^{s }  \frac{\sin( ( s-\theta)\sqrt{\mathcal{A}})}{\sqrt{\mathcal{A}}} f\big( u(t_n+\theta)\big)d\theta ds\\
&-\int_0^{h  }\cos( (h -s)\sqrt{\mathcal{A}})f'(u(t_n)) \int_0^{s }  \frac{\sin( ( s-\theta)\sqrt{\mathcal{A}})}{\sqrt{\mathcal{A}}}d\theta ds  f\big( u(t_n)\big).
\end{aligned}
\end{equation}
The bound of $R_3(t_n)$  will also be studied in Lemma \ref{lamma2}  of next section.

 $\bullet$ \textbf{Approximation to   III$^{u}$ and   III$^{v}$.}

 Finally, we pay attention to the bounds of   III$^{u}$ and   III$^{v}$ which are respectively presented in \eqref{local errors 1-new} and \eqref{local errorsv 1-new}.
Using Proposition \ref{lamma00} again, the following
estimate holds
   \begin{equation*}
\begin{aligned}
  &\norm{\textmd{  III}^{u}}_{H^{1}}\\\lesssim &\norm{ \int_0^{h  }  \frac{\sin( (h -s)\sqrt{\mathcal{A}})}{\sqrt{\mathcal{A}}} R_{f''}(t_n, s) \Big(\int_0^{s }  \frac{\sin( ( s-\theta)\sqrt{\mathcal{A}})}{\sqrt{\mathcal{A}}} f\big( u(t_n+\theta)\big)d\theta \Big)^2 ds} _{H^{1}}\\
\lesssim &  \int_0^{h  } (h -s)
 \norm{R_{f''}(t_n, s) \Big(\int_0^{s }( s-\theta)  \sinc( ( s-\theta)\sqrt{\mathcal{A}})  f\big( u(t_n+\theta)\big) d\theta \Big)^2} _{H^{1}}ds.
\end{aligned}
\end{equation*}
In view of \eqref{RF2}, we obtain for $d=1$
   \begin{equation}\label{  III bound}
\begin{aligned}
   \norm{\textmd{  III}^{u}}_{H^{1}}    \lesssim&   \int_0^{h} (h -s)
 \norm{R_{f''}(t_n, s)} _{H^{1}} \int_0^{s }( s-\theta)^2    \norm{ f\big( u(t_n+\theta)\big)}^2 _{H^{1}} d\theta  ds\\
 \lesssim  & h ^5    \max_{s\in[0 ,h ]} \norm{R_{f''}(t_n, s)} _{H^{1}}  \max_{\zeta\in[0 ,h ]} \norm{f\big( u(t_n+\zeta)\big)} ^2_{H^{1}}
 \lesssim    h ^5
\end{aligned}
\end{equation}
and  for $d=2$ or $3$
$$   \norm{\textmd{  III}^{u}}_{H^{1}}    \lesssim h ^5    \max_{s\in[0 ,h ]} \norm{R_{f''}(t_n, s)} _{H^{1}}  \max_{\zeta\in[0 ,h ]} \norm{f\big( u(t_n+\zeta)\big)} ^2_{H^{\mu}}
 \lesssim    h ^5.$$
By the same arguments, it is arrived that
   \begin{equation}\label{  IIIv bound}
\begin{aligned}
  &\norm{\textmd{  III}^{v}}_{L^{2}}\lesssim\norm{\textmd{  III}^{v}}_{H^{1}} \\\lesssim &\norm{ \int_0^{h  }   \cos( (h -s)\sqrt{\mathcal{A}}) R_{f''}(t_n, s) \Big(\int_0^{s }  \frac{\sin( ( s-\theta)\sqrt{\mathcal{A}})}{\sqrt{\mathcal{A}}} f\big( u(t_n+\theta)\big)d\theta \Big)^2 ds} _{L^{2}}\\
\lesssim &  \int_0^{h  }
 \norm{R_{f''}(t_n, s) \Big(\int_0^{s }( s-\theta)  \sinc( ( s-\theta)\sqrt{\mathcal{A}})  f\big( u(t_n+\theta)\big) d\theta \Big)^2} _{H^{1}}ds\\
  \lesssim  & h ^4    \max_{s\in[0 ,h ]} \norm{R_{f''}(t_n, s)} _{H^{1}}  \max_{\zeta\in[0 ,h ]} \norm{f\big( u(t_n+\zeta)\big)} ^2_{H^{ \mu}}
 \lesssim    h ^4.
\end{aligned}
\end{equation}

As a result, these two parts can be dropped in the numerical scheme without bringing any impact on the accuracy and regularity requirement.

Based on these results, we define a transitional method for  the Klein--Gordon
equation \eqref{klein-gordon} (Definition \ref{def-se}) by considering \eqref{Duhamel} as well as \eqref{local errors 1-new}-\eqref{local errorsv 1-new} and dropping the remainders $R_1(t_n),R_2(t_n),R_3(t_n)$ and   II$^{u}$,   III$^{u}$,   III$^{v}$ appeared in the above formulation:
 \begin{equation}\label{method-3}
\begin{aligned}
u_{n+1}=&\cos(h \sqrt{\mathcal{A}}) u_n+h  \sinc( h \sqrt{\mathcal{A}})  v_n+h^2\Phi_1(h \sqrt{\mathcal{A}}) f( u_n)\\&+h^3  \Psi_1(h \sqrt{\mathcal{A}})f'( u_n)v_n +h^4  \Psi_2(h \sqrt{\mathcal{A}}) F_1(u_n,v_n),\\
v_{n+1}
   =&-h \mathcal{A} \sinc( h \sqrt{\mathcal{A}}) u_n+\cos(h \sqrt{\mathcal{A}}) v_n+
h  \Phi_2(h \sqrt{\mathcal{A}})  f( u_n) \\&+h^2\Phi_1(h \sqrt{\mathcal{A}})f'( u_n)v_n +h^3  \Psi_1(h \sqrt{\mathcal{A}}) \big(F_1(u_n,v_n)+f'(u_n) f(u_n) \big),
\end{aligned}
\end{equation}
\subsubsection{Step 3. Symmetrization procedure}
In this part, we present the final construction of the fourth-order scheme, which is derived from the transitional scheme and a symmetrization procedure.
First, we take one step forward with $n=1$ in the transitional scheme to obtain $x_1$ and $v_1$. For $n \ge 2$, the values of $x_n$ and $v_n$ are determined by a two-step symmetric trigonometric integrator. The detailed construction is outlined as follows.
We denote the transitional scheme \eqref{method-3} by
\begin{equation}\label{method-4}
\begin{aligned}
\Psi_{h}(u_n,v_n):=&u_{n+1}-\cos(h \sqrt{\mathcal{A}}) u_n-h \sinc( h \sqrt{\mathcal{A}}) v_n\\
=&h^2\Phi_1(h \sqrt{\mathcal{A}}) f( u_n)+h^3 \Psi_1(h \sqrt{\mathcal{A}})f'( u_n)v_n +h^4 \Psi_2(h \sqrt{\mathcal{A}}) F_1(u_n,v_n),\\
\Phi_{h}(u_n,v_n):=&v_{n+1}+h \mathcal{A} \sinc( h \sqrt{\mathcal{A}}) u_n-\cos(h \sqrt{\mathcal{A}}) v_n\\
=&h \Phi_2(h \sqrt{\mathcal{A}}) f( u_n) +h^2\Phi_1(h \sqrt{\mathcal{A}})f'( u_n)v_n\\& +h^3 \Psi_1(h \sqrt{\mathcal{A}}) \big(F_1(u_n,v_n)+f'(u_n) f(u_n) \big).
\end{aligned}
\end{equation} 
Subsequently, we consider the following two-step symmetric trigonometric integrator:
\begin{equation}\label{tsei}
\begin{aligned}
u_{n+1}=&\cos(2h \sqrt{\mathcal{A}}) u_{n-1}+h \sinc(2 h \sqrt{\mathcal{A}}) v_{n-1}+\Psi_{h}(u_n,v_n)\\
&-\cos(2h \sqrt{\mathcal{A}}) \Psi_{-h}(u_n,v_n)+h \sinc(2 h \sqrt{\mathcal{A}}) \Phi_{-h}(u_n,v_n),\\
v_{n+1}=&-h \mathcal{A} \sinc(2 h \sqrt{\mathcal{A}}) u_{n-1}+\cos(2h \sqrt{\mathcal{A}}) v_{n-1}+\Phi_{h}(u_n,v_n)\\
&-h \mathcal{A} \sinc(2 h \sqrt{\mathcal{A}})\Psi_{-h}(u_n,v_n)-\cos(2h \sqrt{\mathcal{A}}) \Phi_{-h}(u_n,v_n).
\end{aligned}
\end{equation}
Substituting the transitional scheme \eqref{method-4} into this formulation and simplifying the resulting expressions yields the final scheme \eqref{method}. This completes the construction.
  
\section{Convergence}\label{sec:3}
In this section, we shall derive the convergence  of the proposed   integrator.
We will first present the main result and then  prove the error estimates in a  low regularity condition.
\subsection{Main result}

\begin{mytheo}  \label{symplectic thm}
Let the  nonlinear function $f$ of the Klein--Gordon equation \eqref{klein-gordon} satisfy   the Lipschitz continuity conditions $\abs{f^{(k)}(w)}\leq C_0$  for $w\in \mathbb{R}$ and $k=1,2,3,4$.
Under the  regularity condition  \begin{equation}\label{rc exact}(u(0,x), \partial_tu(0,x))\in [H^{2+ \mu }(\mathbb{T}^d)\bigcap H^{1+ \mu }_0(\mathbb{T}^d)]\times H^{ \mu}(\mathbb{T}^d)\end{equation}
with $	
				\mu=
				\begin{cases}
					1, & d=1,\\
					d/2+\epsilon, & d=2,3,
				\end{cases}$ and $\epsilon>0$,  there exist positive constants $C$ and $h_0$ such
that for any $h\in(0,h_0]$
 the numerical result $u_n,v_n$ produced  in Definition \ref{def-se} has the
global error:
 \begin{equation}\label{error bound}
\begin{aligned}
\max_{0\leq n \leq T/h } \norm{u_n-u(t_n)}_{H^{1}} \leq Ch ^4,\ \ \
\max_{0\leq n \leq T/h } \norm{v_n-v(t_n)}_{L^{2}}\leq Ch ^4,
\end{aligned}
\end{equation}
where $C$  depends only on $C_0$ and $T$.
\end{mytheo}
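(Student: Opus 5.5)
The plan is the standard three-stage argument for two-step trigonometric integrators: local error (consistency) estimates for the starting step and the symmetric two-step recursion, a stability estimate for the linear two-step propagator in the energy space, and a Lady Windermere / discrete Gronwall argument with a bootstrap on the numerical solution.

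\textbf{Local errors of the transitional scheme.} We first bound the remainders dropped in Subsection \ref{sds-construct}. For $R_1$ and $R_2$ we write $\Upsilon(t_n,\zeta)-\Upsilon(t_n,0)=\int_0^\zeta \partial_\sigma\Upsilon(t_n,\sigma)d\sigma$. By \eqref{UPS}, $\partial_\sigma\Upsilon$ contains only $f'''$ and $f''$ multiplied by products of $\gamma^{\intercal}(-\sigma)W$, $\nabla\alpha^{\intercal}(\sigma)W$, their $\sigma$-derivatives and $\mathcal{A}\alpha^{\intercal}(\sigma)W$. The worst factors, such as $\mathcal{A}u\cdot v$ and $\nabla u\cdot\nabla v$, are controlled in $L^2$ (for $R_2$) and in $H^{1}$ after the smoothing by $\sqrt{\mathcal{A}}^{-1}$ (for $R_1$). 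For this we use Proposition \ref{lamma00} and Lemma \ref{lamma0} with $u\in H^{2+\mu}$, $v\in H^{1+\mu}$; in $d=2,3$ we use the second estimate of \eqref{KPI} with one factor in $H^{d/2+\epsilon}$. This gives $\norm{R_1}_{H^1}+\norm{R_2}_{L^2}\lesssim h^4$, since the integrals $\int_0^h\int_0^s\int_0^\zeta$ contribute $h^4$. The term $R_3$ is a difference of $f'$ evaluated along the flow: expanding $f'(\alpha^{\intercal}(s)W)-f'(u(t_n))$ and $f(u(t_n+\theta))-f(u(t_n))$ to first order produces an extra factor $s$, so $\norm{R_3}_{L^2}\lesssim h^4$. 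Together with \eqref{  II bound}, \eqref{  III bound} and \eqref{  IIIv bound}, the one-step defect of \eqref{method-3} is $O(h^4)$ in $H^1\times L^2$. The constants depend on $\sup_t(\norm{u}_{H^{2+\mu}}+\norm{v}_{H^{1+\mu}})$, which is finite by propagation of regularity.

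\textbf{Defect of the symmetric two-step scheme.} This is the main obstacle. The transitional scheme has local error only $h^4$, so summing over $N=T/h$ steps naively gives only $h^3$; we need the two-step defect $\delta_{n+1}$ to be $O(h^5)$. We would apply the exact relation \eqref{Duhamel} forward with $s=h$ and backward with $s=-h$ from $t_n$, and compose with the linear propagator. This expresses $u(t_{n+1})-\cos(2h\sqrt{\mathcal{A}})u(t_{n-1})-\dots$ as $\mathcal{I}(h)-\cos(2h\sqrt{\mathcal{A}})\mathcal{I}(-h)+\dots$, where $\mathcal{I}(\pm h)$ are the exact Duhamel integrals. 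Writing $\mathcal{I}(\pm h)=\Psi_{\pm h}(u(t_n),v(t_n))+E(\pm h)$, we must show that $E(h)$ and the combination formed with $E(-h)$ cancel to leading order. Concretely, the $h^4$-term of $E(h)$ is an even-parity expression: it is built from $\int\!\!\int\!\!\int$ with the Taylor coefficient $\partial_\sigma\Upsilon(t_n,0)$ and $f''$-type quantities at $t_n$. This term is the same for $\pm h$ up to $O(h^5)$, so the symmetric combination in \eqref{tsei} cancels it.

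To verify this we would expand each remainder once more, $\Upsilon(t_n,\zeta)-\Upsilon(t_n,0)=\zeta\partial_\zeta\Upsilon(t_n,0)+O(\zeta^2)$. The second remainder then needs $\partial_\zeta^2\Upsilon$, which involves $f''''$ and is where $|f^{(4)}|\le C_0$ is used. We would check that the new remainder is $O(h^5)$ with the same regularity, by keeping $\mathcal{A}$-powers inside the oscillatory integrals so that $\sinc$/$\sqrt{\mathcal{A}}^{-1}$ absorb them. The same cancellation is needed for $\mathrm{II}$ and $\mathrm{III}$, so we would also expand these one order further. The first step $(u_1,v_1)$ uses \eqref{method-3}, whose error $O(h^4)$ is acceptable since it occurs only once.

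\textbf{Stability and Gronwall.} We set $e_n=(u_n-u(t_n),v_n-v(t_n))$. The error recursion is $e_{n+1}=\mathcal{M}(2h)e_{n-1}+[\text{nonlinear difference at }n]+\delta_{n+1}$, where $\mathcal{M}(2h)$ is the exact linear flow matrix, an isometry in the energy norm $\norm{\sqrt{\mathcal{A}}\,\cdot}_{L^2}\times L^2$. The nonlinear increments $\Psi_{\pm h},\Phi_{\pm h}$ are Lipschitz in $H^1\times L^2$ with constant $O(h)$. The $h^3\Psi_1 F_1$ term contains $(\nabla u_n)^2$ and $v_n^2$; its difference is bounded by $\norm{\nabla e}_{L^2}$ times $L^\infty$-norms, and the operator $h^3\Psi_1$ or $h^4\Psi_2$ still leaves an $O(h)$ Lipschitz factor. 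This requires an a priori bound on $\norm{u_n}_{W^{1,\infty}}$, $\norm{v_n}_{L^\infty}$. We would obtain it by a bootstrap: assuming $\norm{e_k}\le h^{3}$ for $k\le n$, an inverse-free argument is not available, so we would instead control $u_n,v_n$ in $H^{2+\mu}\times H^{1+\mu}$ by a parallel stability estimate at higher regularity, valid for $h\le h_0$. Summing the recursion over the even and odd subsequences and applying discrete Gronwall gives $\max_n\norm{e_n}\lesssim h^4$, which is \eqref{error bound}.
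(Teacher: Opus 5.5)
Your proposal follows essentially the same route as the paper's proof. The steps match: an $O(h^4)$ defect for the transitional scheme from the bounds on $R_1,R_2,R_3$, $\mathrm{II}^u$, $\mathrm{III}^u$, $\mathrm{III}^v$; an $O(h^5)$ defect for the symmetric two-step scheme obtained by cancelling the even $h^4$ term (which the paper only asserts ``due to symmetry''); and a two-step stability and discrete Gronwall argument with a single $O(h^4)$ starting step. When you fill in the cancellation, note that the defect is $E(h)-M(2h)E(-h)$, where $M$ is the linear flow. Matching $h^4$ coefficients $C$ for $\pm h$ therefore suffices only because $C\in H^2\times H^1$ makes $(I-M(2h))C=O(h)$ in $H^1\times L^2$; this is one place where the extra derivative of $H^{2+\mu}\times H^{1+\mu}$ is genuinely used (this is stronger than the $H^{\mu}$ for $v_0$ printed in the theorem, but consistent with the abstract).
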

\begin{remark}
It is noted that for the traditional fourth-order integrators,  the estimates \eqref{error bound} usually hold under the
the requirement  $$(u(0,x), \partial_tu(0,x))\in  H^{5}(\mathbb{T}^d) \times H^{4}(\mathbb{T}^d).$$
In comparison with this condition, the regularity \eqref{rc exact} is lower.\end{remark}

\begin{remark}
Under the conditions of Theorem \ref{symplectic thm} and the regularity assumption
$$(u(0,x), \partial_tu(0,x))\in \left[H^{2+\mu}(\mathbb{T}^d)\cap H^{1+\mu}_0(\mathbb{T}^d)\right]\times H^{1+\mu}(\mathbb{T}^d),$$
the numerical solution generated by the fully-discrete scheme \eqref{method-f} satisfies the following error estimate:
\begin{equation}\label{error bound new}
\begin{aligned}
\norm{\Pi_{N_x}u(t_{n+1})-U_{n+1}}_{H^{1}}+ \norm{\Pi_{N_x}v(t_{n+1})-V_{n+1}}_{L^{2}}  \leq C\big( h ^4+ N_x^{-1- \mu}\big),
\end{aligned}
\end{equation}
where $0\leq n \leq \frac{T}{h}-1$, and $C$ denotes the error constant depending only on $T$ and $C_0$ from Theorem \ref{symplectic thm}.
This result follows from combining the proof of Theorem \ref{symplectic thm} with the standard error analysis of spectral methods. For brevity, we omit the detailed proof. Consequently, by letting $N_x \to \infty$, the convergence of the semi-discretization established in Theorem \ref{symplectic thm} is recovered. In practical computations, one may select a sufficiently large $N_x$ such that the dominant error of the fully-discrete scheme originates from the temporal discretization.

\end{remark}

The proof of Theorem  \ref{symplectic thm} is given in the rest parts of this section.  

\subsection{Bounds on the remainders $R_1(t_n),R_2(t_n),R_3(t_n)$}
We begin with  the bounds on the remainders $R_1(t_n)$, $R_2(t_n)$ and   $R_3(t_n)$ which are presented in  \eqref{R1}, \eqref{R2} and \eqref{R3}, respectively. 
\begin{mylemma}\label{lamma1}
Under the  conditions of Theorem   \ref{symplectic thm},
the remainder $R_1(t_n)$ given in \eqref{R1}  is bounded by
   \begin{equation}\label{R1 bound result}
\begin{aligned}\norm{R_1(t_n)}_{H^{1}}\lesssim h^4.
\end{aligned}
\end{equation}
\end{mylemma}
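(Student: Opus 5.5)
The plan is to bound $R_1(t_n)$ directly from its definition \eqref{R1}, writing $\Upsilon(t_n,\zeta)-\Upsilon(t_n,0)=\int_0^{\zeta}\partial_\sigma\Upsilon(t_n,\sigma)\,d\sigma$. This converts $R_1$ into a triple integral over $0\le\sigma\le\zeta\le s\le h$. The scalar kernels then contribute one power of $h$ each: $(h-s)$, and the three nested integrations in $s,\zeta,\sigma$. Counting these gives $h\cdot h^3=h^4$, provided the operators and the integrand can be bounded in $H^1$ uniformly in $h$.

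For the operator factors I will expand the product $\alpha^{\intercal}(h-2s)\beta(\zeta)$. It equals
\[
\cos((h-2s)\sqrt{\mathcal{A}})\,\zeta\sinc(\zeta\sqrt{\mathcal{A}})+(h-2s)\sinc((h-2s)\sqrt{\mathcal{A}})\cos(\zeta\sqrt{\mathcal{A}}).
\]
By Proposition \ref{lamma00}, each term is bounded on $H^{\nu}$ by $\lesssim h$. In fact the product is $\sin((h-2s+\zeta)\sqrt{\mathcal{A}})/\sqrt{\mathcal{A}}$ by the addition formula. This yields an extra factor of $h$, and it even gains one derivative via $\sqrt{\mathcal{A}}^{-1}$. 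Hence
\[
\norm{R_1}_{H^1}\lesssim h^5\max_{\sigma\in[0,h]}\norm{\partial_\sigma\Upsilon(t_n,\sigma)}_{L^2}
\quad\text{or}\quad
\norm{R_1}_{H^1}\lesssim h^4\max_{\sigma\in[0,h]}\norm{\partial_\sigma\Upsilon(t_n,\sigma)}_{H^{1}}\cdot h .
\]
Since only $h^4$ is needed, I will use the weaker form that retains the $\sqrt{\mathcal{A}}^{-1}$ gain. This reduces the lemma to showing $\max_\sigma\norm{\partial_\sigma\Upsilon(t_n,\sigma)}_{L^2}\lesssim 1$, or to an even weaker norm if extra powers of $h$ are spent.

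The main step, and the expected obstacle, is estimating $\partial_\sigma\Upsilon$ from the last expression in \eqref{UPS}. Write $w(\sigma)=\alpha^{\intercal}(\sigma)W(t_n)$ and note that $\partial_\sigma w=\gamma^{\intercal}(-\sigma)W(t_n)$ up to sign conventions, with $\partial_\sigma\gamma^{\intercal}(-\sigma)W=-\mathcal{A}w$. Differentiating then produces three kinds of terms:
\begin{itemize}
\item $f'''(w)\,\partial_\sigma w\,\big((\partial_\sigma w)^2-|\nabla w|^2\big)$;
\item $f''(w)\big(\partial_\sigma w\,\mathcal{A}w-\nabla w\cdot\nabla\partial_\sigma w\big)$;
\item lower order $\rho$-terms.
\end{itemize}
The worst term is $f''(w)\,\partial_\sigma w\,\mathcal{A}w$ in $L^2$. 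Here $\mathcal{A}w\in H^{\mu}$ because $u(t_n)\in H^{2+\mu}$, where I rely on the propagated regularity of the exact solution in the theorem's space. Also $\partial_\sigma w\in H^{1+\mu}$, or $H^{\mu}$ if only $v\in H^{\mu}$ is assumed. Since $\mu>d/2$, $H^{\mu}$ is an algebra and embeds in $L^\infty$. The Kato–Ponce bounds of Lemma \ref{lamma0} together with $|f^{(k)}|\le C_0$ for $k\le 4$ then give
\[
\norm{f''(w)\,\partial_\sigma w\,\mathcal{A}w}_{L^2}\lesssim \norm{\partial_\sigma w}_{L^\infty}\norm{\mathcal{A}w}_{L^2}\lesssim 1 .
\]
The cubic terms are handled similarly, using $\nabla w,\partial_\sigma w\in H^{\mu}\subset L^\infty$. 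In $d=1$ with $\mu=1$, $H^1\subset L^\infty$ suffices.

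The case analysis $d=1$ versus $d=2,3$ parallels the bounds \eqref{  II bound}. The only delicate point is confirming that every product has at most one factor outside $L^\infty$. Combining the operator bound with this uniform estimate gives $\norm{R_1(t_n)}_{H^1}\lesssim h^4$.
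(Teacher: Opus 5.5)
Your proposal is correct and has the same skeleton as the paper. The shared steps are the identity $\alpha^{\intercal}(h-2s)\beta(\zeta)=\sin((h-2s+\zeta)\sqrt{\mathcal{A}})/\sqrt{\mathcal{A}}$, spending $\sqrt{\mathcal{A}}^{-1}$ to pass from $L^2$ to $H^1$, and reducing the lemma to $\norm{\Upsilon(t_n,\zeta)-\Upsilon(t_n,0)}_{L^2}\lesssim\zeta$.

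You differ only in how that last bound is obtained. The paper never differentiates $\Upsilon$. It splits the difference into $\Delta_1 f,\dots,\Delta_4 f$ and uses Kato--Ponce, with $f''(\alpha^{\intercal}(\zeta)W(t_n))$ and its increment measured in $H^{\mu}$, an $L^4$ embedding for the squares, and free-flow increments such as $\norm{\gamma^{\intercal}(-\zeta)W(t_n)-\gamma^{\intercal}(0)W(t_n)}_{L^2}\lesssim\zeta(\norm{u(t_n)}_{H^2}+\norm{v(t_n)}_{H^1})$. You instead write the difference as $\int_0^\zeta\partial_\sigma\Upsilon\,d\sigma$ and bound $\partial_\sigma\Upsilon$ in $L^2$ by H\"older, with every factor but one in $L^\infty$ via $H^{\mu}\hookrightarrow L^\infty$.

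Your route is more elementary. It needs only the pointwise bounds $\abs{f^{(k)}}\le C_0$, not the $H^{\mu}$ and $H^{\mu}$-Lipschitz estimates for compositions like $f''(w)$ that the paper uses without justification. Its one extra cost is checking that $\sigma\mapsto\Upsilon(t_n,\sigma)$ is $C^1$ into $L^2$, which follows from the same regularity. Both arguments consume exactly $u(t_n)\in H^{1+\mu}\cap H^2$ and $v(t_n)\in H^{\mu}\cap H^1$.

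Two side claims in your write-up are wrong, although your final argument does not use them.

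First, the operator $\sin(x\sqrt{\mathcal{A}})/\sqrt{\mathcal{A}}$ with $\abs{x}\le h$ gives either a factor $h$ on $H^1$, or the gain $L^2\to H^1$ with $O(1)$ norm, but not both at once. You can see this on frequencies $\abs{\xi}\gg 1/h$. So $\norm{R_1}_{H^1}\lesssim h^5\max_\sigma\norm{\partial_\sigma\Upsilon}_{L^2}$ is not justified. Likewise, extra powers of $h$ can only be traded for \emph{stronger} norms of $\partial_\sigma\Upsilon$, not weaker ones. The bound you actually use, $h^4\max_\sigma\norm{\partial_\sigma\Upsilon(t_n,\sigma)}_{L^2}$, is correct and is the analogue of the paper's $h^3\max_\zeta\norm{\Upsilon(t_n,\zeta)-\Upsilon(t_n,0)}_{L^2}$.

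Second, $\mathcal{A}w\in H^{\mu}$ is not available under the theorem's hypothesis $v_0\in H^{\mu}$. In that case $u(t_n)\notin H^{2+\mu}$ in general, and $\mathcal{A}w$ contains $\sqrt{\mathcal{A}}\sin(\sigma\sqrt{\mathcal{A}})v(t_n)$. Fortunately you only need $\norm{\mathcal{A}w}_{L^2}\lesssim\norm{u(t_n)}_{H^2}+\norm{v(t_n)}_{H^1}$, which is available.
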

\begin{proof}
By using the expression \eqref{R1},  $R_1(t_n)$ can be estimated as
   \begin{equation}\label{R1RR}
\begin{aligned}
&\norm{R_1(t_n)}_{H^{1}} \\ =  &  \norm{\int_0^{h  } \int_0^{s} (h -s)  \frac{ \sin( (h-2s+\zeta)\sqrt{\mathcal{A}}) }{\sqrt{\mathcal{A}}}\big( \Upsilon(t_n,\zeta)-\Upsilon(t_n,0)\big)d\zeta ds}_{H^{1}} \\
\lesssim&   \int_0^{h  } (h -s) \int_0^{s} \norm{    \sin( (h-2s+\zeta)\sqrt{\mathcal{A}})   \frac{\Upsilon(t_n,\zeta)-\Upsilon(t_n,0) }{\sqrt{\mathcal{A}}} }_{H^{1}}d\zeta ds \\
\lesssim&   \int_0^{h  } (h -s) \int_0^{s}  \norm{ \frac{\Upsilon(t_n,\zeta)-\Upsilon(t_n,0) }{\sqrt{\mathcal{A}}} }_{H^{1}}d\zeta ds\\
\lesssim&  h ^3  \max_{\zeta \in[0,h ]} \norm{\frac{ \Upsilon(t_n,\zeta)-\Upsilon(t_n,0)}{\sqrt{\mathcal{A}}}}_{H^{1}}
\lesssim   h ^3  \max_{\zeta \in[0,h ]} \norm{\Upsilon(t_n,\zeta)-\Upsilon(t_n,0)}_{L^{2}}.
\end{aligned}
\end{equation}
Then  we are devoted to the bound of $\Upsilon(t_n,\zeta)-\Upsilon(t_n,0)$. According to the result \eqref{UPS} of  $\Upsilon$, we decompose  $\Upsilon(t_n,\zeta)-\Upsilon(t_n,0)$ into four parts
   \begin{equation}\label{mUpsilon}
\begin{aligned}
&\Upsilon(t_n,\zeta)-\Upsilon(t_n,0)\\
=&\underbrace{f''\big( \alpha^{\intercal}(\zeta) W(t_n)\big)\big(\gamma^{\intercal}(-\zeta) W(t_n) \big)^2-
f''\big( \alpha^{\intercal}(0) W(t_n)\big)\big(\gamma^{\intercal}(0) W(t_n) \big)^2}_{=:\Delta_1 f}\\&
+ \underbrace{f''\big( \alpha^{\intercal}(0) W(t_n)\big)\big(\nabla \alpha^{\intercal}(0) W(t_n) \big)^2-  f''\big( \alpha^{\intercal}(\zeta) W(t_n)\big)\big(\nabla \alpha^{\intercal}(\zeta) W(t_n) \big)^2}_{=:\Delta_2 f}\\
                                     &+\underbrace{\rho  f\big( \alpha^{\intercal}(\zeta) W(t_n)\big)-\rho  f\big( \alpha^{\intercal}(0) W(t_n)\big)}_{=:\Delta_3 f}\\
                                     &+\underbrace{\rho  f'\big( \alpha^{\intercal}(0) W(t_n)\big)\alpha^{\intercal}(0) W(t_n)-\rho  f'\big( \alpha^{\intercal}(\zeta) W(t_n)\big)\alpha^{\intercal}(\zeta) W(t_n)}_{=:\Delta_4 f}.
                                     \end{aligned}
\end{equation}
In what follows, we  will deduce the results for these four terms one by one.

 $\bullet$ \textbf{Bound on $\norm{\Delta_1 f}_{L^{2}}$.}

We first recall the Kato–Ponce inequalities given in Lemma \ref{lamma0} and as a result
   \begin{equation*}
\begin{aligned}
\norm{\Delta_1 f}_{L^{2}}
\lesssim&\norm{f''\big( \alpha^{\intercal}(\zeta) W(t_n)\big)-f''\big( \alpha^{\intercal}(0) W(t_n)\big)}_{H^{\mu}}\norm{\big(\gamma^{\intercal}(-\zeta) W(t_n) \big)^2}_{L^{2}}\\
&+\norm{f''\big( \alpha^{\intercal}(0) W(t_n)\big)}_{H^{\mu}}\norm{\big(\gamma^{\intercal}(-\zeta) W(t_n) \big)^2-\big(\gamma^{\intercal}(0) W(t_n) \big)^2}_{L^{2}}\\
\lesssim&\norm{f''\big( \alpha^{\intercal}(\zeta) W(t_n)\big)-f''\big( \alpha^{\intercal}(0) W(t_n)\big)}_{H^{\mu}}\norm{\big(\gamma^{\intercal}(-\zeta) W(t_n) \big)^2}_{L^{2}}\\
&+\norm{f''\big( \alpha^{\intercal}(0) W(t_n)\big)}_{H^{\mu}}\norm{ \gamma^{\intercal}(-\zeta) W(t_n) +\gamma^{\intercal}(0) W(t_n) }_{H^{\mu}}\\&\ \ \ \norm{ \gamma^{\intercal}(-\zeta) W(t_n)  - \gamma^{\intercal}(0) W(t_n)  }_{L^{2}}.\end{aligned}
\end{equation*}
Based on this result, it is needed to estimate
   \begin{equation*}
\begin{aligned}
&\norm{f''\big( \alpha^{\intercal}(\zeta) W(t_n)\big)-f''\big( \alpha^{\intercal}(0) W(t_n)\big)}_{H^{\mu}}\\
\lesssim&\norm{  \alpha^{\intercal}(\zeta) W(t_n)- \alpha^{\intercal}(0) W(t_n)}_{H^{\mu}}\\
 \lesssim&\norm{  \cos(\zeta\sqrt{\mathcal{A}}) u(t_n)-u(t_n)+\zeta \sinc( \zeta\sqrt{\mathcal{A}})  v(t_n)}_{H^{\mu}}
 \\
 \lesssim&\norm{  -2\sin(\zeta\sqrt{\mathcal{A}}/2) \sinc(\zeta\sqrt{\mathcal{A}}/2) \zeta\sqrt{\mathcal{A}} u(t_n)}_{H^{\mu}}+\zeta\norm{ v(t_n)}_{H^{\mu}} \\
 \lesssim& \zeta\big(\norm{  u(t_n)}_{H^{1+\mu}}+\norm{ v(t_n)}_{H^{\mu}}\big)\end{aligned}
\end{equation*}
by using Proposition \ref{lamma00}.  Then the Sobolev embedding theorem shows that
   \begin{equation*}
\begin{aligned}
&\norm{ (\gamma^{\intercal}(-\zeta) W(t_n))^2}_{L^{2}}\\
 \lesssim&  \norm{ \gamma^{\intercal}(-\zeta) W(t_n)}^2_{L^{4}}
  \lesssim  \norm{ \gamma^{\intercal}(-\zeta) W(t_n)}^2_{W^{1,p}}  \quad (W^{1,p}\hookrightarrow L^{4})\\
   \lesssim& \norm{ \gamma^{\intercal}(-\zeta) W(t_n)}^2_{W^{\frac{d}{4},2}}\quad   (W^{\frac{d}{4},2}\hookrightarrow W^{1,p})\\
 \lesssim &\norm{ \gamma^{\intercal}(-\zeta) W(t_n)}^2_{H^{\frac{d}{4}}}
 \lesssim  \norm{-\sqrt{\mathcal{A}} \sin(\zeta\sqrt{\mathcal{A}}) u(t_n)+\cos(\zeta\sqrt{\mathcal{A}}) v(t_n)}^2_{H^{\frac{d}{4}}}\\
  \lesssim&  \norm{  u(t_n)}^2_{H^{1+\frac{d}{4}}}+\norm{ v(t_n)}^2_{H^{\frac{d}{4}}}+2\norm{  u(t_n)}_{H^{1+\frac{d}{4}}}\norm{ v(t_n)}_{H^{\frac{d}{4}}}.
\end{aligned}
\end{equation*}
Meanwhile, it follows from \eqref{notations w} that
   \begin{equation*}
\begin{aligned}
 &\norm{ \gamma^{\intercal}(-\zeta) W(t_n) +\gamma^{\intercal}(0) W(t_n) }_{H^{\mu}} \\\lesssim& \norm{-\sqrt{\mathcal{A}} \sin(\zeta\sqrt{\mathcal{A}}) u(t_n)+\cos(\zeta\sqrt{\mathcal{A}}) v(t_n)+v(t_n)} _{H^{\mu}}\\
 \lesssim& \norm{ u(t_n)} _{H^{1+\mu}}+\norm{ v(t_n)} _{H^{\mu}}.
\end{aligned}
\end{equation*}
Analogously, we obtain
   \begin{equation*}
\begin{aligned}
 &\norm{ \gamma^{\intercal}(-\zeta) W(t_n) -\gamma^{\intercal}(0) W(t_n) }_{L^{2}}\\ \lesssim& \norm{-\sqrt{\mathcal{A}} \sin(\zeta\sqrt{\mathcal{A}}) u(t_n)+\cos(\zeta\sqrt{\mathcal{A}}) v(t_n)-v(t_n)} _{L^{2}}\\
\lesssim& \norm{\zeta\sqrt{\mathcal{A}} \sqrt{\mathcal{A}} \sinc(\zeta\sqrt{\mathcal{A}}) u(t_n)} _{L^{2}}+\norm{ -2\sin(\zeta\sqrt{\mathcal{A}}/2) \sinc(\zeta\sqrt{\mathcal{A}}/2) \zeta\sqrt{\mathcal{A}}  v(t_n)} _{L^{2}}\\
  \lesssim& \zeta \norm{ u(t_n)} _{H^{2}}+ \zeta \norm{   v(t_n)} _{H^{1}}.
\end{aligned}
\end{equation*}
Under the regularity condition
$$(u(0,x), \partial_tu(0,x))\in  H^{2}(\mathbb{T}^d) \times H^{1}(\mathbb{T}^d),$$
one gets
$$\norm{ \gamma^{\intercal}(-\zeta) W(t_n) -\gamma^{\intercal}(0) W(t_n) }_{L^{2}}   \lesssim  \zeta.$$

 Overall, collecting all the estimates together yields
   \begin{equation*}
\begin{aligned}
\norm{\Delta_1 f}_{L^{2}}
\lesssim& \zeta\big(\norm{  u(t_n)}_{H^{1+\mu}}+\norm{ v(t_n)}_{H^{\mu}}\big) \big(\norm{  u(t_n)}^2_{H^{1+\frac{d}{4}}}+\norm{ v(t_n)}^2_{H^{\frac{d}{4}}}\\&+2\norm{  u(t_n)}_{H^{1+\frac{d}{4}}}\norm{ v(t_n)}_{H^{\frac{d}{4}}} + \norm{ u(t_n)} _{H^{2}}+  \norm{   v(t_n)} _{H^{1}}\big)
\lesssim\zeta.\end{aligned}
\end{equation*}

 $\bullet$ \textbf{Bound on $\norm{\Delta_2 f}_{L^{2}}$.}

By the similar argument as $\Delta_1 f$,  we decompose $\Delta_2 f$ into four terms:
   \begin{equation*}
\begin{aligned}
&\norm{\Delta_2 f}_{L^{2}}\\
=&\norm{f''\big( \alpha^{\intercal}(\zeta) W(t_n)\big)\big(\nabla \alpha^{\intercal}(\zeta) W(t_n) \big)^2-
f''\big( \alpha^{\intercal}(0) W(t_n)\big)\big(\nabla \alpha^{\intercal}(0) W(t_n) \big)^2}_{L^{2}}\\
\lesssim&\norm{f''\big( \alpha^{\intercal}(\zeta) W(t_n)\big)-f''\big( \alpha^{\intercal}(0) W(t_n)\big)}_{H^{\mu}}\norm{\big(\nabla \alpha^{\intercal}(\zeta) W(t_n) \big)^2}_{L^{2}}\\
&+\norm{f''\big( \alpha^{\intercal}(0) W(t_n)\big)}_{H^{\mu}}\norm{\big(\nabla \alpha^{\intercal}(\zeta) W(t_n) \big)^2-\big(\nabla \alpha^{\intercal}(0) W(t_n) \big)^2}_{L^{2}}.\end{aligned}
\end{equation*}
It is noted that this formula contains the same expression as   $\Delta_1 f$ and thus we only need to estimate the different two terms: $$\norm{\big(\nabla \alpha^{\intercal}(\zeta) W(t_n) \big)^2}_{L^{2}}$$ and $$\norm{\big(\nabla \alpha^{\intercal}(\zeta) W(t_n) \big)^2-\big(\nabla \alpha^{\intercal}(0) W(t_n) \big)^2}_{L^{2}}.$$ For the first one, it follows from  the Sobolev embedding theorem
that
    \begin{equation*}
\begin{aligned}
&\norm{ (\nabla \alpha^{\intercal}(\zeta) W(t_n))^2}_{L^{2}}\\
 \lesssim&  \norm{\nabla \alpha^{\intercal}(\zeta) W(t_n)}^2_{L^{4}}
  \lesssim  \norm{ \nabla \alpha^{\intercal}(\zeta) W(t_n)}^2_{W^{1,p}}  \quad (W^{1,p}\hookrightarrow L^{4})\\
   \lesssim& \norm{ \nabla \alpha^{\intercal}(\zeta) W(t_n)}^2_{W^{\frac{d}{4},2}}\quad   (W^{\frac{d}{4},2}\hookrightarrow W^{1,p})\\
 \lesssim &\norm{ \nabla \alpha^{\intercal}(\zeta) W(t_n)}^2_{H^{\frac{d}{4}}}
 \lesssim  \norm{\cos(\zeta\sqrt{\mathcal{A}}) u(t_n)+  \sin( \zeta\sqrt{\mathcal{A}})\frac{v(t_n)}{\sqrt{\mathcal{A}}}}^2_{H^{1+\frac{d}{4}}}\\
  \lesssim&  \norm{  u(t_n)}^2_{H^{1+\frac{d}{4}}}+\norm{ v(t_n)}^2_{H^{\frac{d}{4}}}+2\norm{  u(t_n)}_{H^{1+\frac{d}{4}}}\norm{ v(t_n)}_{H^{\frac{d}{4}}}.
\end{aligned}
\end{equation*}
 For the second one, we use the Kato–Ponce inequality to get
     \begin{equation*}
\begin{aligned}
&\norm{\big(\nabla \alpha^{\intercal}(\zeta) W(t_n) \big)^2-\big(\nabla \alpha^{\intercal}(0) W(t_n) \big)^2}_{L^{2}}  \\
\lesssim& \norm{\nabla \alpha^{\intercal}(\zeta) W(t_n) +\nabla \alpha^{\intercal}(0) W(t_n) }_{H^{\mu}}\norm{\nabla \alpha^{\intercal}(\zeta) W(t_n) -\nabla \alpha^{\intercal}(0) W(t_n) }_{L^{2}}\\
\lesssim &\norm{\cos(\zeta\sqrt{\mathcal{A}}) u(t_n)+  \sin( \zeta\sqrt{\mathcal{A}})\frac{v(t_n)}{\sqrt{\mathcal{A}}}+u(t_n)} _{H^{1+\mu}}\\& \norm{(\cos(\zeta\sqrt{\mathcal{A}})-1) u(t_n)+\zeta  \sinc( \zeta\sqrt{\mathcal{A}}) v(t_n) } _{H^{1}}\\
\lesssim &\big(\norm{ u(t_n)} _{H^{1+\mu}}+\norm{ v(t_n)} _{H^{\mu}}\big)\\& \big(\norm{ -2\sin(\zeta\sqrt{\mathcal{A}}/2) \sinc(\zeta\sqrt{\mathcal{A}}/2) \zeta\sqrt{\mathcal{A}}  u(t_n)} _{H^{1}}+\norm{\zeta\sinc(\zeta\sqrt{\mathcal{A}}) v(t_n)} _{H^{1}}\big)\\
\lesssim & \zeta \big(\norm{ u(t_n)} _{H^{1+\mu}}+\norm{ v(t_n)} _{H^{\mu}}\big) \big(\norm{ u(t_n)} _{H^{2}}+ \zeta \norm{   v(t_n)} _{H^{1}}\big).
\end{aligned}
\end{equation*}
 To summarize, we have
$$
\norm{\Delta_2 f}_{L^{2}} \lesssim \zeta.$$

  $\bullet$ \textbf{Bound on $\norm{\Delta_3 f}_{L^{2}}$.}

The following  estimate can be proved in the same way as above
     \begin{equation*}
\begin{aligned}
 &\norm{\Delta_3 f}_{L^{2}}\\
 =& \norm{ \rho  f\big( \alpha^{\intercal}(\zeta) W(t_n)\big)-\rho  f\big( \alpha^{\intercal}(0) W(t_n)\big)}_{L^{2}}\\
   \lesssim& \norm{  f'\Big( \big(\alpha^{\intercal}(\zeta)  -\varsigma(\alpha^{\intercal}(0) -\alpha^{\intercal}(\zeta) )\big)W(t_n)\Big)
    (\alpha^{\intercal}(0) -\alpha^{\intercal}(\zeta) )W(t_n)  }_{L^{2}}\\
       \lesssim& \norm{  f'\Big( \big(\alpha^{\intercal}(\zeta)  -\varsigma(\alpha^{\intercal}(0) -\alpha^{\intercal}(\zeta) )\big)W(t_n)\Big)}_{H^{\mu}} \norm{
    (\alpha^{\intercal}(0) -\alpha^{\intercal}(\zeta) )W(t_n)}_{L^{2}}\\
 \lesssim &\norm{  \alpha^{\intercal}(\zeta) W(t_n)- \alpha^{\intercal}(0) W(t_n)}_{L^{2}}
  \lesssim \zeta\big(\norm{  u(t_n)}_{H^{1}}+\norm{ v(t_n)}_{L^{2}}\big),
  \end{aligned}
\end{equation*}
where $\varsigma\in[0,1]$.

   $\bullet$ \textbf{Bound on $\norm{\Delta_4 f}_{L^{2}}$.}

   For the last part, we analogously obtain
        \begin{equation*}
\begin{aligned}
 \norm{\Delta_4 f}_{L^{2}}=& \norm{ \rho  f'\big( \alpha^{\intercal}(0) W(t_n)\big)\alpha^{\intercal}(0) W(t_n)-\rho  f'\big( \alpha^{\intercal}(\zeta) W(t_n)\big)\alpha^{\intercal}(\zeta) W(t_n)}_{L^{2}}\\
\lesssim&\norm{f'\big( \alpha^{\intercal}(\zeta) W(t_n)\big)-f'\big( \alpha^{\intercal}(0) W(t_n)\big)}_{H^{\mu}}\norm{ \alpha^{\intercal}(\zeta) W(t_n)}_{L^{2}}\\
&+\norm{f'\big( \alpha^{\intercal}(0) W(t_n)\big)}_{H^{\mu}}\norm{\alpha^{\intercal}(\zeta) W(t_n) - \alpha^{\intercal}(0) W(t_n)}_{L^{2}}\\
\lesssim&\zeta\big(\norm{  u(t_n)}_{H^{1+\mu}}+\norm{ v(t_n)}_{H^{\mu}}\big)\big(\norm{  u(t_n)}_{H^{1}}+\zeta\norm{ v(t_n)}_{L^{2}}\big).
  \end{aligned}
\end{equation*}
 Consequently, based on the above results, we arrive at
 \begin{equation}\label{zeta- bound result}\max_{\zeta \in[0,h ]} \norm{\Upsilon(t_n,\zeta)-\Upsilon(t_n,0)}_{L^{2}}\lesssim\max_{\zeta \in[0,h ]}   \zeta \lesssim h.\end{equation}
 Combining this with \eqref{R1RR} immediately gives the statement \eqref{R1 bound result}  and  the proof of this lemma is complete.
 \hfill $\blacksquare$
\end{proof}

\begin{mylemma}\label{lamma2}
Suppose that the  conditions of Theorem   \ref{symplectic thm} hold. For
the remainders $R_2(t_n)$ and $R_3(t_n)$   respectively defined in \eqref{R2} and \eqref{R3}, they are bounded by
   \begin{equation}\label{R2 bound result}
\begin{aligned}\norm{R_2(t_n)}_{L^{2}}\lesssim h^4,\ \ \ \norm{R_3(t_n)}_{L^{2}}\lesssim h^4.
\end{aligned}
\end{equation}
\end{mylemma}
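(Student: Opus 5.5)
For $R_2(t_n)$ I will follow the proof of Lemma \ref{lamma1} almost verbatim. The first step is to rewrite the integrand of \eqref{R2} with the product formulas for the trigonometric functions. Using $\gamma^{\intercal}(2s-h)\beta(\zeta)=\cos\big((h-2s+\zeta)\sqrt{\mathcal{A}}\big)$, a direct check from \eqref{notations w}, one gets
\begin{equation*}
R_2(t_n)=\int_0^{h}(h-s)\int_0^{s}\cos\big((h-2s+\zeta)\sqrt{\mathcal{A}}\big)\big(\Upsilon(t_n,\zeta)-\Upsilon(t_n,0)\big)\,d\zeta\, ds .
\end{equation*}
Proposition \ref{lamma00} then bounds the cosine operator by $1$ in $L^2$. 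This gives
\begin{equation*}
\norm{R_2(t_n)}_{L^2}\lesssim h^3\max_{\zeta\in[0,h]}\norm{\Upsilon(t_n,\zeta)-\Upsilon(t_n,0)}_{L^2}.
\end{equation*}
The factor $h^3$ comes from $\int_0^h (h-s)s\,ds$. Invoking \eqref{zeta- bound result}, already established in the proof of Lemma \ref{lamma1} under the same assumptions, yields $\norm{R_2(t_n)}_{L^2}\lesssim h^4$. Unlike the $u$-component, there is no $\sqrt{\mathcal{A}}^{-1}$ here, but none is needed because we only measure in $L^2$.

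For $R_3(t_n)$ I will split the difference in \eqref{R3} into two parts. The first carries the coefficient difference $f'(\alpha^{\intercal}(s)W(t_n))-f'(u(t_n))$ multiplying $\int_0^s \frac{\sin((s-\theta)\sqrt{\mathcal{A}})}{\sqrt{\mathcal{A}}} f(u(t_n+\theta))\,d\theta$. The second carries $f'(u(t_n))$ multiplying $\int_0^s \frac{\sin((s-\theta)\sqrt{\mathcal{A}})}{\sqrt{\mathcal{A}}}\big(f(u(t_n+\theta))-f(u(t_n))\big)\,d\theta$. The outer $\cos((h-s)\sqrt{\mathcal{A}})$ is harmless in $L^2$, and $\frac{\sin((s-\theta)\sqrt{\mathcal{A}})}{\sqrt{\mathcal{A}}}=(s-\theta)\sinc((s-\theta)\sqrt{\mathcal{A}})$ gives a factor $(s-\theta)$. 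With this, each part reduces to $h^3$ times a bound on a product.

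For the first part, I will estimate the product in $L^2$ by the Kato--Ponce inequality (Lemma \ref{lamma0}), with the $H^{\mu}$ norm placed on $f(u(t_n+\theta))$ and the $L^2$ norm on the coefficient difference. Lipschitz continuity of $f'$ then bounds the coefficient difference by $\norm{(\alpha^{\intercal}(s)-\alpha^{\intercal}(0))W(t_n)}_{L^2}\lesssim s$, exactly as in the bound on $\Delta_3 f$. For the second part, $\norm{f'(u(t_n))}_{H^\mu}\lesssim 1$. Moreover, $\norm{f(u(t_n+\theta))-f(u(t_n))}_{L^2}\lesssim\norm{u(t_n+\theta)-u(t_n)}_{L^2}\lesssim\theta\max_t\norm{v(t)}_{L^2}$. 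Hence both parts contribute $O(h^4)$.

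The main obstacle is justifying the uniform-in-time bounds $\max_{t\in[0,T]}\big(\norm{u(t)}_{H^{1+\mu}}+\norm{v(t)}_{H^{\mu}}\big)\lesssim 1$, together with $H^{\mu}$ bounds on compositions such as $f'(\alpha^{\intercal}(s)W(t_n))$ and $f(u(t_n+\theta))$. For $\mu>d/2$ these follow from $H^{\mu}$ being an algebra and from the smoothness of $f$. I will take these, as in Lemma \ref{lamma1}, as consequences of the regularity assumption \eqref{rc exact} and the well-posedness of \eqref{klein-gordon}.
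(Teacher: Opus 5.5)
Your proposal is correct and follows the paper's proof. For $R_2(t_n)$ it uses the same kernel identity $\gamma^{\intercal}(2s-h)\beta(\zeta)=\cos((h-2s+\zeta)\sqrt{\mathcal{A}})$ together with \eqref{zeta- bound result}, and for $R_3(t_n)$ it uses the same two-term splitting \eqref{R3n}. The only deviation is in the first term of $R_3(t_n)$, where you swap the paper's placement of norms: you put $H^{\mu}$ on $f(u(t_n+\theta))$ and $L^2$ on $f'(\alpha^{\intercal}(s)W(t_n))-f'(u(t_n))$, whereas the paper bounds that difference in $H^{\mu}$ by $s(\norm{u(t_n)}_{H^{1+\mu}}+\norm{v(t_n)}_{H^{\mu}})$. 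Your choice is equally valid and slightly cleaner, since it needs only the pointwise Lipschitz bound on $f'$.
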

\begin{proof}
Keeping \eqref{zeta- bound result} in mind, the following  estimate can be proved in the same way  as Lemma \ref{lamma1}
\begin{equation*}
\begin{aligned}
\norm{R_2(t_n)}_{L^{2}}= &\norm{\int_0^{h  } \int_0^{s} (h -s)  \cos( (h-2s+\zeta)\sqrt{\mathcal{A}})  \big( \Upsilon(t_n,\zeta)-\Upsilon(t_n,0)\big)d\zeta ds}_{L^{2}} \\
\lesssim& \int_0^{h  } \int_0^{s} \abs{h -s} \norm{ \big( \Upsilon(t_n,\zeta)-\Upsilon(t_n,0)\big)}_{L^{2}}d\zeta ds \\
\lesssim&  h ^3  \max_{\zeta \in[0,h ]} \norm{ \Upsilon(t_n,\zeta)-\Upsilon(t_n,0) }_{L^{2}}\lesssim h^4.
\end{aligned}
\end{equation*}
Then we  represent the scheme of $R_3(t_n)$ as
   \begin{equation*}
\begin{aligned}
&R_3(t_n)\\
=&\int_0^{h  }\cos( (h -s)\sqrt{\mathcal{A}})f'\big(\alpha^{\intercal}(s)W(t_n)\big) \int_0^{s }  \frac{\sin( ( s-\theta)\sqrt{\mathcal{A}})}{\sqrt{\mathcal{A}}} f\big( u(t_n+\theta)\big)d\theta ds\\
&-\int_0^{h  }\cos( (h -s)\sqrt{\mathcal{A}})f'(u(t_n)) \int_0^{s }  \frac{\sin( ( s-\theta)\sqrt{\mathcal{A}})}{\sqrt{\mathcal{A}}}d\theta ds  f\big( u(t_n)\big)
\end{aligned}
\end{equation*}
and split it into
   \begin{equation}\label{R3n}
\begin{aligned}
R_3(t_n)
=&\int_0^{h  }\cos( (h -s)\sqrt{\mathcal{A}})\big(f'\big(\alpha^{\intercal}(s)W(t_n)\big)-f'(u(t_n))\big)\\& \int_0^{s }  \frac{\sin( ( s-\theta)\sqrt{\mathcal{A}})}{\sqrt{\mathcal{A}}} f\big( u(t_n+\theta)\big)d\theta ds+\int_0^{h  }\cos( (h -s)\sqrt{\mathcal{A}})f'(u(t_n)) \\
&\int_0^{s }  \frac{\sin( ( s-\theta)\sqrt{\mathcal{A}})}{\sqrt{\mathcal{A}}} \big(f\big( u(t_n+\theta)\big)-f\big( u(t_n)\big)\big)d\theta ds.
\end{aligned}
\end{equation}
Some derivations lead to
    \begin{equation*}
\begin{aligned}
&\norm{R_3(t_n)}_{L^{2}}\\
\lesssim &  \int_0^{h  }\int_0^{ s } \norm{ f'\big(\alpha^{\intercal}(s)W(t_n)\big)-f'(u(t_n))} _{H^{\mu}}  \abs{s-\theta}\norm{ f\big( u(t_n+\theta)\big)} _{L^{2}}d\theta ds \\
&+  \int_0^{h  }\int_0^{ s } \norm{ f'(u(t_n))} _{H^{\mu}}  \abs{s-\theta}\norm{ f\big( u(t_n+\theta)\big)-f\big( u(t_n)\big)} _{L^{2}}d\theta ds \\
\lesssim &  h ^3  \max_{s\in[0,h]} \norm{ f'\big(\alpha^{\intercal}(s)W(t_n)\big)-f'(u(t_n))} _{H^{\mu}}\\
&+h ^3  \max_{\theta\in[0,h]} \norm{f\big( u(t_n+\theta)\big)-f\big( u(t_n)\big)} _{L^{2}}\\
\lesssim &  h ^3  \max_{s\in[0,h]} \norm{ f''\big(\varsigma_1(\alpha^{\intercal}(s)W(t_n)-u(t_n))+u(t_n) \big)\big(\alpha^{\intercal}(s)W(t_n)-u(t_n)\big)} _{H^{\mu}}\\
&+h ^3  \max_{\theta\in[0,h]} \norm{ f'\big(\varsigma_2( u(t_n+\theta)-u(t_n))+u(t_n) \big)\big( u(t_n+\theta)-u(t_n)\big)}_{L^{2}}\\
\lesssim &  h ^3  \max_{s\in[0,h]} \norm{ f''\big(\varsigma_1(\alpha^{\intercal}(s)W(t_n)-u(t_n))+u(t_n) \big)} _{H^{\mu}}\norm{\alpha^{\intercal}(s)W(t_n)-u(t_n)} _{H^{\mu}}\\
&+h ^3  \max_{\theta\in[0,h]} \norm{ f'\big(\varsigma_2( u(t_n+\theta)-u(t_n))+u(t_n) \big)} _{H^{\mu}}\norm{ u(t_n+\theta)-u(t_n)}_{L^{2}}\\
\lesssim &  h ^3  \max_{s\in[0,h]} \norm{  \alpha^{\intercal}(s)W(t_n) -u(t_n)} _{H^{\mu}}+h ^3  \max_{\theta\in[0,h]} \norm{ u(t_n+\theta) -  u(t_n)} _{L^{2}}\\
 \lesssim&  h ^3  \max_{s\in[0,h]} s\big(\norm{  u(t_n)}_{H^{1+\mu}}+\norm{ v(t_n)}_{H^{\mu}}\big) + h ^3  \max_{\theta\in[0,h]} \theta  \norm{  v(t_n)}_{L^{2}}
  \lesssim  h ^4
\end{aligned}
\end{equation*}
with $\varsigma_1, \varsigma_2\in[0,1]$.
The proof is complete.  \hfill $\blacksquare$
\end{proof}

\subsection{Proof of   Theorem \ref{symplectic thm}}
Sofar we have derived the bounds for the remainders which are dropped in the numerical scheme.
Based on them, we can present the error analysis (the proof of   Theorem \ref{symplectic thm}) in what follows.
\begin{proof}
\textbf{(I) Error estimates of the transitional scheme \eqref{method-3}.}
%
%

To estimate the error of the transitional scheme \eqref{method-3}
$$e_n^{u}:=u(t_{n})-\Psi_{h}(u_{n-1},v_{n-1}),\quad e_n^{v}:=v(t_{n})-\Phi_{h}(u_{n-1},v_{n-1}),\quad    0\leq n\leq  T /h ,$$
we shall first consider the local truncation errors which are defined by inserting
the solution of  \eqref{klein-gordon} into \eqref{method-3}:
 \begin{equation}\label{local errors}
\begin{aligned}
\zeta_n^{u}:=&u(t_{n+1})-\cos(h \sqrt{\mathcal{A}}) u(t_n)-h  \sinc( h \sqrt{\mathcal{A}})  v(t_n)-h^2\Phi_1(h \sqrt{\mathcal{A}}) f( u(t_n))\\&-h^3  \Psi_1(h \sqrt{\mathcal{A}})f'( u(t_n))v(t_n) -h^4  \Psi_2(h \sqrt{\mathcal{A}})  \Big(f''(u(t_n))v(t_n)^2\\&-  f''(u(t_n))(\nabla u(t_n))^2+\rho  f(u(t_n))-\rho  f'(u(t_n))u(t_n)\Big),\\
\zeta_n^{v}:=&v(t_{n+1})+h \mathcal{A} \sinc( h \sqrt{\mathcal{A}}) u(t_n)-\cos(h \sqrt{\mathcal{A}}) v(t_n)-
h  \Phi_2(h \sqrt{\mathcal{A}})  f( u(t_n)) \\&-h^2\Phi_1(h \sqrt{\mathcal{A}})f'( u(t_n))v(t_n) -h^3  \Psi_1(h \sqrt{\mathcal{A}})  \Big(f''(u(t_n))(v(t_n)^2\\&-  (\nabla u(t_n))^2)+\rho  f(u(t_n))- f'(u(t_n))(\rho u(t_n)- f(u(t_n)))\Big),
\end{aligned}
\end{equation}
where $n=0,1,\ldots, T/h-1$.  For the coefficient functions appeared above, it follows from  \eqref{method-coefficient} that the  functions $\Phi_1(m),\Phi_2(m),m\Psi_1(m),m^2\Psi_2(m)$ are uniformly bounded for any
$m\in \mathbb{R}$ which immediately leads to
\begin{equation}\label{method-coefficient-bound}
\begin{aligned}
 &\norm{\Phi_1(h \sqrt{\mathcal{A}}) y}_{H^{\nu}}\lesssim \norm{y}_{H^{\nu}},\qquad \ \ \  \norm{\Phi_2(h \sqrt{\mathcal{A}}) y}_{H^{\nu}}\lesssim \norm{y}_{H^{\nu}},\\
   &\norm{h \sqrt{\mathcal{A}} \Psi_1(h \sqrt{\mathcal{A}}) y}_{H^{\nu}}\lesssim \norm{y}_{H^{\nu}},\ \ \norm{(h \sqrt{\mathcal{A}})^2\Psi_2(h \sqrt{\mathcal{A}}) y}_{H^{\nu}}\lesssim \norm{y}_{H^{\nu}},
\end{aligned}
\end{equation}
where we assume that $y\in H^{\nu}$ with any $\nu\geq 0$.

Subtracting the corresponding local error terms \eqref{local errors} from the scheme  \eqref{method-3}, we get the recurrence relation
for the errors
 \begin{equation}\label{equ errors}
\begin{aligned}
&e_{n+1}^{u}- \cos(h \sqrt{\mathcal{A}})e_n^{u}-h  \sinc( h \sqrt{\mathcal{A}})e_{n}^{v}=\zeta_n^{u}+h^2\eta_n^{u},\\&
e_{n+1}^{v}+ h \mathcal{A} \sinc( h \sqrt{\mathcal{A}})e_n^{u}-\cos(h \sqrt{\mathcal{A}})e_n^{v}=\zeta_n^{v}+h\eta_n^{v},\ n=0,1,\ldots, T/h-1,
\end{aligned}
\end{equation}
where   we denote
 \begin{equation}\label{sta errors}
\begin{aligned}
\eta_n^{u}:=&\Phi_1(h \sqrt{\mathcal{A}}) \big(f( u_n)-f( u(t_n))\big) +h   \Psi_1(h \sqrt{\mathcal{A}})\big(f'( u_n)v_n-f'( u(t_n))v(t_n)\big)\\& +h^2  \Psi_2(h \sqrt{\mathcal{A}})  \Big(\big(f''(u_n)v_n^2-f''(u(t_n))v(t_n)^2\big) \\&-  \big(f''(u_n)(\sqrt{\mathcal{A}}u_n)^2-f''(u(t_n))(\nabla u(t_n))^2\big)\\
&+\rho  \big(f(u_n)-f(u(t_n))\big)-\rho  \big(f'(u_n)u_n-f'(u(t_n))u(t_n)\big)\Big),\\
 \eta_n^{v}:=& \Phi_2(h \sqrt{\mathcal{A}})   \big(f( u_n)-f( u(t_n))\big) +h\Phi_1(h \sqrt{\mathcal{A}})\big(f'( u_n)v_n-f'( u(t_n))v(t_n)\big)\\&  +h^2  \Psi_1(h \sqrt{\mathcal{A}})  \Big(\big(f''(u_n)v_n^2-f''(u(t_n))v(t_n)^2\big) \\&-  \big(f''(u_n)(\sqrt{\mathcal{A}}u_n)^2-f''(u(t_n))(\nabla u(t_n))^2\big)\\
&+\rho  \big(f(u_n)-f(u(t_n))\big)-\rho  \big(f'(u_n)u_n-f'(u(t_n))u(t_n)\big) \\&-  \big( f'(u_n) f(u_n)- f'(u(t_n)) f(u(t_n))\big)\Big).
\end{aligned}
\end{equation}
The starting value of \eqref{equ errors} is  $e_0^{u}=e_0^{v}=0.$

In what follows, the proof is  divided into  three parts. The first one is about   the boundedness of numerical solution,
the second is devoted to local errors $\zeta_n^{u}, \zeta_n^{v}$ and stability $\eta_n^{u}, \eta_n^{v}$, and the last one concerns global errors $e_n^{u}, e_n^{v}$.

$\bullet$ In this part, we prove the boundedness of numerical solution:
\begin{equation}\label{bound num}
 \norm{u_{n} } _{H^{1+ \mu}}+\norm{v_{n}}_{H^{ \mu}}\lesssim 1,\ \ \ n=0,1,\ldots, T/h.
\end{equation}
For $n=0$,   (\ref{bound num})  is obviously true. Then we assume  that it holds  up to some $n=1,2,\ldots,m$, and we shall show that (\ref{bound num}) is true for $m+1$.
To this end, we reformulate
the scheme   \eqref{method-3}
 as
\begin{equation*}
\begin{aligned}
&\left(
  \begin{array}{c}
u_{m+1}\\
v_{m+1} \\
  \end{array}
\right)=\left(
  \begin{array}{cc}
         \cos(h \sqrt{\mathcal{A}})       & h  \sinc( h \sqrt{\mathcal{A}}) \\
    - h \mathcal{A} \sinc( h \sqrt{\mathcal{A}})  &  \cos(h \sqrt{\mathcal{A}}) \\
  \end{array}
\right) \left(
  \begin{array}{c}
   u_{m} \\
     v_{m} \\
  \end{array}
\right)
\\&+h \left(
          \begin{array}{c}
            h \Phi_1  f( u_m) +h^2 \Psi_1 f'( u_m)v_m +h^3  \Psi_2  F_1(u_m,v_m)\\
            \Phi_2   f( u_m)  +h\Phi_1 f'( u_m)v_m +h^2  \Psi_1  \big(F_1(u_m,v_m)+f'(u_m) f(u_m) \big)\\
          \end{array}
        \right).
\end{aligned}
\end{equation*}
Using the notation $\norm{\left(
  \begin{array}{c}
  w_1\\
   w_2 \\
  \end{array}
\right)}_{1+\mu }:=\sqrt{\norm{ w_1}_{H^{1+\mu }}^2+\norm{w_2}_{H^{\mu }}^2}$, it is derived that
  $$\norm{\left(
  \begin{array}{cc}
         \cos(h \sqrt{\mathcal{A}})       & h  \sinc( h \sqrt{\mathcal{A}}) \\
    - h \mathcal{A} \sinc( h \sqrt{\mathcal{A}})  &  \cos(h \sqrt{\mathcal{A}}) \\
  \end{array}
\right) \left(
  \begin{array}{c}
    u_m \\
  v_m \\
  \end{array}
\right) }_{1+\mu }\leq\norm{ \left(
  \begin{array}{c}
    u_m \\
  v_m \\
  \end{array}
\right) }_{1+\mu }.$$
According to the boundedness   (\ref{bound num}) up to  $m$, the expression \eqref{method-coefficient} of the coefficient functions $ \Psi_1, \Psi_2, \Phi_1, \Phi_2$  and the assumption on $f$ given in Theorem  \ref{symplectic thm}, it is easy to see that
\begin{equation*}
\begin{aligned}
&\norm{\left(
  \begin{array}{c}
u_{m+1}\\
v_{m+1} \\
  \end{array}
\right)}_{1+\mu }\leq \norm{\left(
  \begin{array}{c}
u_{m}\\
v_{m} \\
  \end{array}
\right)}_{1+\mu }
+   Ch .
\end{aligned}
\end{equation*}
Applying the Gronwall inequality  immediately yields  (\ref{bound num}) for $n=m+1$.

$\bullet$ By the bounds \eqref{ II bound}, \eqref{ III bound}-\eqref{ IIIv bound} and Lemmas \ref{lamma1}--\ref{lamma2},
the local errors $\zeta_n^{u}, \zeta_n^{v}$ are estimated as
\begin{equation*}
\begin{aligned}&\norm{\zeta_n^{u}}_{H^1}\lesssim  \norm{R_1(t_n)}_{H^1} +\norm{\textmd{Part II}^{u}}_{H^{1}}+\norm{\textmd{Part III}^{u}}_{H^{1}} \lesssim  h ^4,\\ &\norm{\zeta_n^{v}}_{L^2}\lesssim  \norm{R_2(t_n)}_{L^2} +\norm{R_3(t_n)}_{L^2}+\norm{\textmd{Part III}^{v}}_{L^2} \lesssim  h ^4.\end{aligned}
\end{equation*}
 To establish the estimate for  the stability  terms $\eta_n^{u}$ and $\eta_n^{v}$ defined in \eqref{sta errors},
we  consider the   terms of \eqref{sta errors} separately. For the first and last terms, it is easy to see that
 \begin{equation*}
\begin{aligned}
&h^2  \norm{\Psi_2(h \sqrt{\mathcal{A}}) \Big(\rho  \big(f(u_n)-f(u(t_n))\big)-\rho  \big(f'(u_n)u_n-f'(u(t_n))u(t_n)\big)\Big)}_{H^1}\\
& +\norm{\Phi_1(h \sqrt{\mathcal{A}}) \big(f( u_n)-f( u(t_n))\big)}_{H^1}\lesssim \norm{e_n^{u}}_{H^1},
\end{aligned}
\end{equation*}
 Based on the  Lipschitz continuity conditions, Kato–Ponce inequalities and \eqref{method-coefficient-bound},
it is derived that
 \begin{equation*}
\begin{aligned}
&\norm{h   \Psi_1(h \sqrt{\mathcal{A}})\big(f'( u_n)v_n-f'( u(t_n))v(t_n)\big)}_{H^1}\\
=&
\norm{h \sqrt{\mathcal{A}}  \Psi_1(h \sqrt{\mathcal{A}}) \frac{f'( u_n)v_n-f'( u(t_n))v(t_n)}{\sqrt{\mathcal{A}}} }_{H^1}
\\\lesssim&\norm{ f'( u_n)v_n-f'( u(t_n))v(t_n) }_{L^2}\\
\lesssim&\norm{f'( u_n)}_{H^{\mu}}\norm{e_n^{v}}_{L^2}+
\norm{f'( u_n) -f'( u(t_n)) }_{L^2}\norm{v(t_n)}_{H^{\mu}}
\\\lesssim& \norm{e_n^{v}}_{L^2}+ \norm{e_n^{u}}_{L^2},
\end{aligned}
\end{equation*}
and
 \begin{equation*}
\begin{aligned}
&\norm{h^2  \Psi_2(h \sqrt{\mathcal{A}})  \big(f''(u_n)v_n^2-f''(u(t_n))v(t_n)^2\big)}_{H^1}\\
=&h
\norm{h \sqrt{\mathcal{A}}  \Psi_2(h \sqrt{\mathcal{A}}) \frac{f''(u_n)v_n^2-f''(u(t_n))v(t_n)^2}{\sqrt{\mathcal{A}}} }_{H^1}\\
\lesssim&h\norm{f''(u_n)v_n^2-f''(u(t_n))v(t_n)^2}_{L^2}
\lesssim h(\norm{e_n^{v}}_{L^2}+ \norm{e_n^{u}}_{L^2}).
\end{aligned}
\end{equation*}
With the same arguments, we get
 \begin{equation*}
\begin{aligned}
&\norm{h^2  \Psi_2(h \sqrt{\mathcal{A}})  \big(f''(u_n)(\nabla u_n)^2-f''(u(t_n))(\nabla u(t_n))^2\big)}_{H^1}\\
=&
\norm{h^2  \mathcal{A}   \Psi_2(h \sqrt{\mathcal{A}}) \frac{f''(u_n)(\nabla u_n)^2-f''(u(t_n))(\nabla u(t_n))^2}{ \mathcal{A}} }_{H^1}\\
\lesssim  &
\norm{ \frac{f''(u_n)(\nabla u_n)^2-f''(u(t_n))(\nabla u(t_n))^2}{ \mathcal{A}} }_{H^1}\\
\lesssim  &
\norm{ \frac{\big(f''(u_n)-f''(u(t_n))\big)(\nabla u_n)^2}{ \sqrt{\mathcal{A}}} }_{L^2} +\norm{ \frac{f''(u(t_n))\big((\nabla u_n)^2-(\nabla u(t_n))^2\big)}{\sqrt{ \mathcal{A}}} }_{L^2}.
\end{aligned}
\end{equation*}
According to the results of Lemma \ref{lamma0} and the boundedness \eqref{bound num}, we find
 \begin{equation*}
\begin{aligned}
&
\norm{ \frac{\big(f''(u_n)-f''(u(t_n))\big)(\nabla u_n)^2}{ \sqrt{\mathcal{A}}} }_{L^2}\\
\lesssim&   \norm{f''(u_n)-f''(u(t_n))}_{L^2}\norm{u_n\nabla u_n}_{H^{\mu}} \\
\lesssim &
\norm{u^2_n}_{H^{1+\mu}}\norm{f''(u_n)-f''(u(t_n))}_{L^2} \lesssim  \norm{e_n^{u}}_{H^1},
\end{aligned}
\end{equation*}
and
 \begin{equation*}
\begin{aligned}
&
\norm{ \frac{f''(u(t_n))\big((\nabla u_n)^2-(\nabla u(t_n))^2\big)}{\sqrt{ \mathcal{A}}} }_{L^2}\\
\lesssim &  \norm{f''(u(t_n)}_{H^{\mu}}\norm{\nabla u^2_n - \nabla u^2(t_n) }_{L^2}\\
\lesssim &
\norm{  u^2_n - u^2(t_n) }_{H^1}\norm{f''(u(t_n)}_{H^{\mu}}\\
\lesssim& \norm{  u_n + u(t_n) }_{H^{\mu }} \norm{e_n^{u}}_{H^1} \lesssim \norm{e_n^{u}}_{H^1}.
\end{aligned}
\end{equation*}
Here the boundedness \eqref{bound num} of numerical solution is used in the derivation.
Therefore, it is obtained that
\begin{equation*}
\begin{aligned}
&\norm{h^2  \Psi_2(h \sqrt{\mathcal{A}})  \big(f''(u_n)(\nabla u_n)^2-f''(u(t_n))(\nabla u(t_n))^2\big)}_{H^1} \lesssim  \norm{e_n^{u}}_{H^1}.
\end{aligned}
\end{equation*}

Combining these results with the first formula of \eqref{sta errors} gives
$$
\norm{\eta_n^{u}}_{H^1}
\lesssim  \norm{e_n^{u}}_{H^1}+\norm{e_n^{v}}_{L^2}.
$$
In a similar way,  we can get the same result for $\eta_n^{v}$:$$
\norm{\eta_n^{v}}_{L^2}
\lesssim  \norm{e_n^{u}}_{H^1}+\norm{e_n^{v}}_{L^2}.$$

$\bullet$
 Now we turn back to the recurrence relation \eqref{equ errors} which is rewritten in the form:
\begin{equation*}
\begin{aligned}
&\left(
  \begin{array}{c}
  e_{n+1}^{u}\\
   e_{n+1}^{v} \\
  \end{array}
\right)=\left(
  \begin{array}{cc}
         \cos(h \sqrt{\mathcal{A}})       & h  \sinc( h \sqrt{\mathcal{A}}) \\
    - h \mathcal{A} \sinc( h \sqrt{\mathcal{A}})  &  \cos(h \sqrt{\mathcal{A}}) \\
  \end{array}
\right) \left(
  \begin{array}{c}
    e_{n}^u \\
     e_{n}^v \\
  \end{array}
\right)
+ \left(
          \begin{array}{c}
            \zeta_n^{u}+h^2\eta_n^{u}\\
            \zeta_n^{v}+h\eta_n^{v}\\
          \end{array}
        \right).
\end{aligned}
\end{equation*}
It is bounded by
\begin{equation*}
\begin{aligned}
&\norm{\left(
  \begin{array}{c}
  e_{n+1}^{u}\\
   e_{n+1}^{v} \\
  \end{array}
\right)}_1\leq \norm{\left(
  \begin{array}{c}
    e_{n}^u \\
     e_{n}^v \\
  \end{array}
\right)}_1
+ \norm{\left(
          \begin{array}{c}
            \zeta_n^{u}+h^2\eta_n^{u}\\
            \zeta_n^{v}+h\eta_n^{v}\\
          \end{array}
        \right)}_1\\
        \leq& \norm{\left(
  \begin{array}{c}
    e_{n}^u \\
     e_{n}^v \\
  \end{array}
\right)}_1
+  \sqrt{(\norm{\zeta_n^{u}}_{H^1}+h^2\norm{\eta_n^{u}}_{H^1})^2+
   (\norm{\zeta_n^{v}}_{L^2}+h^2\norm{\eta_n^{v}}_{L^2})^2}\\
           \leq& \norm{\left(
  \begin{array}{c}
    e_{n}^u \\
     e_{n}^v \\
  \end{array}
\right)}_1
+  C\sqrt{h^8+h^6( \norm{e_n^{u}}_{H^1}+\norm{e_n^{v}}_{L^2})+h^4(\norm{e_n^{u}}^2_{H^1}+\norm{e_n^{v}}^2_{L^2})}\\
           \leq& \norm{\left(
  \begin{array}{c}
    e_{n}^u \\
     e_{n}^v \\
  \end{array}
\right)}_1
+  C h^4+Ch^3 \sqrt{\norm{e_n^{u}}_{H^1}+\norm{e_n^{v}}_{L^2}}+Ch^2 \sqrt{\norm{e_n^{u}}^2_{H^1}+\norm{e_n^{v}}^2_{L^2}}.
\end{aligned}
\end{equation*}
Applying the Gronwall inequality  yields
$$
\sqrt{\norm{e_{n+1}^{u}}^2_{H^1}+\norm{e_{n+1}^{v}}^2_{L^2}}\leq C h^3.
$$
This shows  \begin{equation}\label{error bound-3}
\begin{aligned}
\max_{0\leq n \leq T/h } \norm{u_n-u(t_n)}_{H^{1}} \leq Ch ^3,\ \ \
\max_{0\leq n \leq T/h } \norm{v_n-v(t_n)}_{L^{2}}\leq Ch ^3,
\end{aligned}
\end{equation}
for the transitional scheme \eqref{method-3}.

\textbf{(II) Error estimates of the final scheme \eqref{method}.} 

With the notations $\mathbf{w} = (u, v)^\intercal$ and $\mathbf{w}_n = (u_n, v_n)^\intercal$, we
denote the final scheme \eqref{method}  as $$\mathbf{w}_{n+1}=\mathbf{\Upsilon}_h(\mathbf{w}_n).$$
For this scheme,  based on \eqref{tsei}, it is helpful to write
 \begin{equation}\label{FS-3}
\mathbf{\Upsilon}_h(\mathbf{w}_n)=  e^{ih \mathcal{K}}\mathbf{w}_n + \mathbf{F}(\mathbf{w}_n)
\end{equation}
with $\mathcal{K}=\left(
                                 \begin{array}{cc}
                                   0 & I \\
                                   -\mathcal{A} & 0 \\
                                 \end{array}
                               \right)$  and
\begin{equation*} 
\begin{aligned}
\mathbf{F}(\mathbf{w}_n)=& \left(
                             \begin{array}{c}
                              \Psi_{h}(u_n,v_n)-\cos(2h \sqrt{\mathcal{A}}) \Psi_{-h}(u_n,v_n)+h \sinc(2 h \sqrt{\mathcal{A}}) \Phi_{-h}(u_n,v_n)\\
                               \Phi_{h}(u_n,v_n)-h \mathcal{A} \sinc(2 h \sqrt{\mathcal{A}})\Psi_{-h}(u_n,v_n)-\cos(2h \sqrt{\mathcal{A}}) \Phi_{-h}(u_n,v_n) \\
                             \end{array}
                           \right).
\end{aligned}
\end{equation*}
Then we have
\begin{align*}
\|\mathbf{\Upsilon}_h(\mathbf{x}) -\mathbf{\Upsilon}_h(\mathbf{y})\|_{H^\alpha}  &\leq \left\|  e^{ih \mathcal{K}} (\mathbf{x}-\mathbf{y})\right\|_{H^\alpha}  
 + \left\| \mathbf{F}(\mathbf{x})-\mathbf{F}(\mathbf{y})\right\|_{H^\alpha} \\
&\leq \|\mathbf{x} - \mathbf{y}\|_{H^\alpha} + h C\bigl(\|\mathbf{x}\|_{H^\alpha}, \|\mathbf{y}\|_{H^\alpha}\bigr) \|\mathbf{x}-\mathbf{y}\|_{H^\alpha},
\end{align*}
where in the final line we used the fact that $ e^{ih \mathcal{K}},\cos(2h \sqrt{\mathcal{A}}),\sinc(2 h \sqrt{\mathcal{A}})$ are isometries on $H^\alpha (\alpha\geq 0)$ and the stability property of $\Psi_{h},\Phi_{h}$. This clearly implies  the following stability of the final scheme \eqref{method}
\[
\|\mathbf{\Upsilon}_h(\mathbf{x}) -\mathbf{\Upsilon}_h(\mathbf{y})\|_{H^\alpha} \leq \Big(1 + hC\bigl(\|\mathbf{x}\|_{H^\alpha}, \|\mathbf{y}\|_{H^\alpha}\bigr) \Big) \|\mathbf{x}-\mathbf{y}\|_{H^\alpha}. \tag{3.2}
\]

Then we estimate the local error $ \mathbf{w}(t_{n+1})-\mathbf{\Upsilon}_h(\mathbf{w}(t_{n})) $ of the final scheme \eqref{method}.
According to the scheme  \eqref{FS-3} and the local errors of transitional scheme defined in \eqref{local errors}, it is obtained that
\begin{align*}
 \mathbf{w}(t_{n+1})-\mathbf{\Upsilon}_h(\mathbf{w}(t_{n})) 
&= \bigl( \zeta_n^{u},\zeta_n^{v} \bigr)^\intercal-e^{2ih \mathcal{K}} \bigl( \zeta_{-n}^{u},\zeta_{-n}^{v} \bigr)^\intercal,
\end{align*}
where the notations $\zeta_{-n}^{u}$ and $\zeta_{-n}^{v}$ denote the local errors 
  $\zeta_{-n}^{u} := u(t_{n+1} - h) - \Psi_{-h}(u(t_{n+1}),v(t_{n+1}))$  and   $\zeta_{-n}^{v} := v(t_{n+1} - h) - \Phi_{-h}(u(t_{n+1}),v(t_{n+1}))$, respectively.
Due to symmetry and the similar analysis  as transitional scheme, this error can be improved to be
    \[
    |||\bigl( \zeta_n^{u},\zeta_n^{v} \bigr)^\intercal-e^{2ih \mathcal{K}} \bigl( \zeta_{-n}^{u},\zeta_{-n}^{v} \bigr)^\intercal|||_{1} \leq  C h^{5}
    \]
    under the  regularity condition   \eqref{rc exact}.
 Here the norm $ |||\bigl(x,y \bigr)^\intercal|||_{1}$ denotes  $$ |||\bigl(x,y \bigr)^\intercal|||_{1}=\|x\|_{H^{1}}+\|y\|_{L^{2}}.$$

Combining the above stability and local error estimates,  the global convergence result \eqref{error bound}  of our final method \eqref{method} can be proved immediately by   the standard error arguments. Therefore,  the theorem is
confirmed.  \hfill $\blacksquare$
\end{proof}

\section{Numerical test}\label{sec:4}

In this section, we show the numerical performance of the proposed fourth-order low-regularity integrator (LRI) by
comparing it with the well-known exponential integrators (EIs) \cite{Hochbruck10}.
 We choose a fourth-order EI   from \cite{Hochbruck06} (denoted by EIO4) and an  exponential fitting  third-order EI   from \cite{wang12} (denoted by EIO3). The reference solution is obtained by using a very fine stepsize for the fourth-order low-regularity integrator.

 We shall
present the results with an one-dimensional example of (\ref{klein-gordon})
for simplicity: the nonlinear  Klein--Gordon equation \eqref{klein-gordon} with $d=1, \rho=0,\mathbb{T}=(-\pi,\pi), f(u)=\sin(u)$
(we  note that  the scheme proposed in  \cite{Zhao23} is not applicable to this nonlinear function).
  We choose the initial values $\psi_1(x)$ and $\psi_2(x)$ in the same way  as described in Section 5.1 of \cite{O18} and Section 4 of \cite{Zhao23}. More precisely, the  initial values $\psi_1(x)$ and $\psi_2(x)$
are in the space  $H^{\theta}(\mathbb{T})\times H^{\theta-1}(\mathbb{T})$.

We first  verify the spatial accuracy of the
our method where  $h=10^{-6}$ is chosen such
 that the temporal error is
negligible compared to the spatial error.  Figure  \ref{p0new} plots the error:
$err=\frac{\norm{u_n-u(t_n)}_{H^{1}}}{\norm{u(t_n)}_{H^{1}}}+\frac{\norm{v_n-v(t_n)}_{L^{2}}}{\norm{v(t_n)}_{L^{2}}}$ under different $N_x$ and $\theta$, where the reference solution is obtained again by the LRI with $N_x = 2^{16}$.
From the results, it is seen that for the large  $N_x$  the
error  brought by the  spatial discretization   can be neglected. Thence, we choose $N_x=2^{12}$ in the experiment.

In this test, we report the global errors
\begin{equation*}
  err = \frac{\|u_n-u(t_n)\|_{H^{1}}}{\|u(t_n)\|_{H^{1}}}
      + \frac{\|v_n-v(t_n)\|_{L^{2}}}{\|v(t_n)\|_{L^{2}}}
\end{equation*}
at time $T=1$. The results for different initial data
$(\psi_1,\psi_2)\in H^{\theta}(\mathbb{T})\times H^{\theta-1}(\mathbb{T})$
are presented in Figure~\ref{p1}. It can be observed that, for initial data
in $H^{\theta}(\mathbb{T})\times H^{\theta-1}(\mathbb{T})$ with $\theta=4$,
all the methods attain their theoretical convergence orders. However, for
rougher initial data with $\theta=3,2$, only the low-regularity integrator
(LRI) proposed in this paper retains the expected fourth-order convergence
in $H^{1}(\mathbb{T})\times L^{2}(\mathbb{T})$, whereas the exponential
integrators suffer from a clear order reduction. This indicates that the
new integrator LRI performs significantly better than the exponential
integrators when the solution regularity is low. Notably, even when the
regularity assumption required for fourth-order convergence is slightly
violated ($\theta=2$), no significant order reduction is observed for the
LRI. This phenomenon merits further investigation in future work.

 To investigate the practical gain from our proposed integrator, we study the efficiency of all the methods.
 Figure \ref{fig2}  displays the error at $T=1$ against the CPU time. It can be seen from the results that our proposed method LRI  can reach the same error level with remarkably less CPU time for the  problem with lower regularity, and this clearly demonstrates the more efficiency of LRI  than the classical  exponential integrators.

  For the numerical experiments in two-dimensional  or three-dimensional space, the method has similar performance and we skip it for brevity.
    \begin{figure}[t!]
\centering
\includegraphics[width=3.9cm,height=4.4cm]{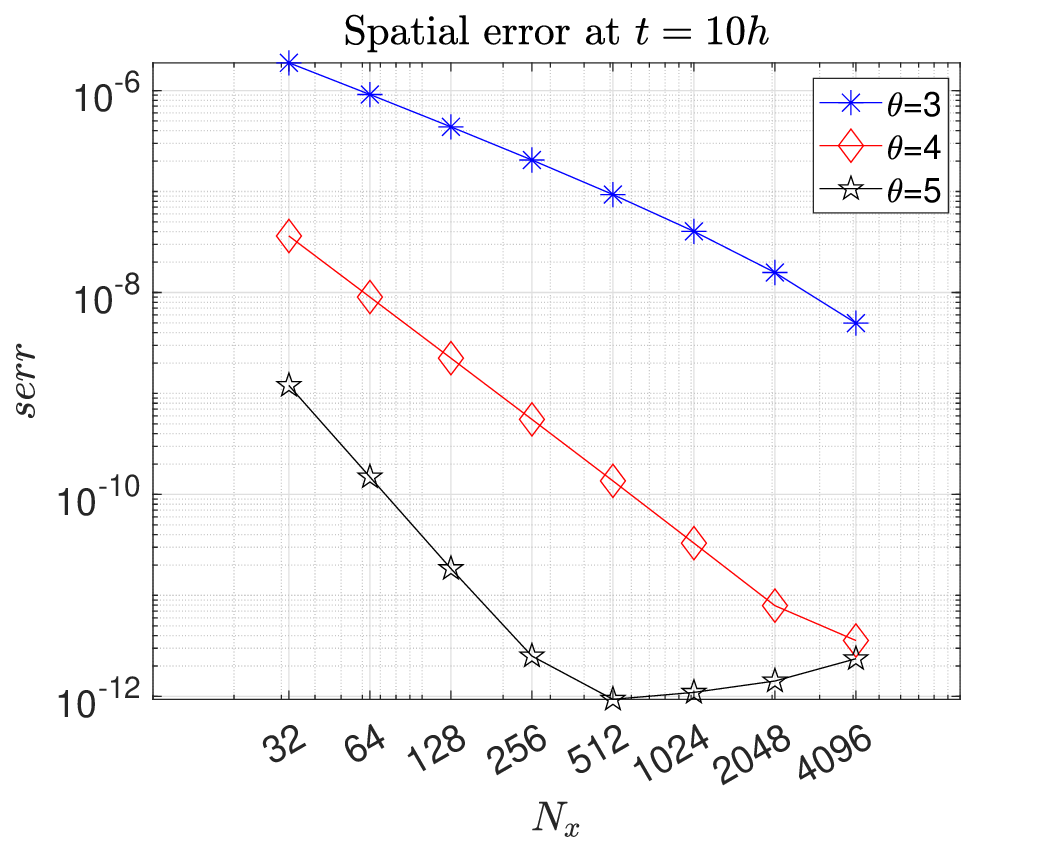}
\includegraphics[width=3.9cm,height=4.4cm]{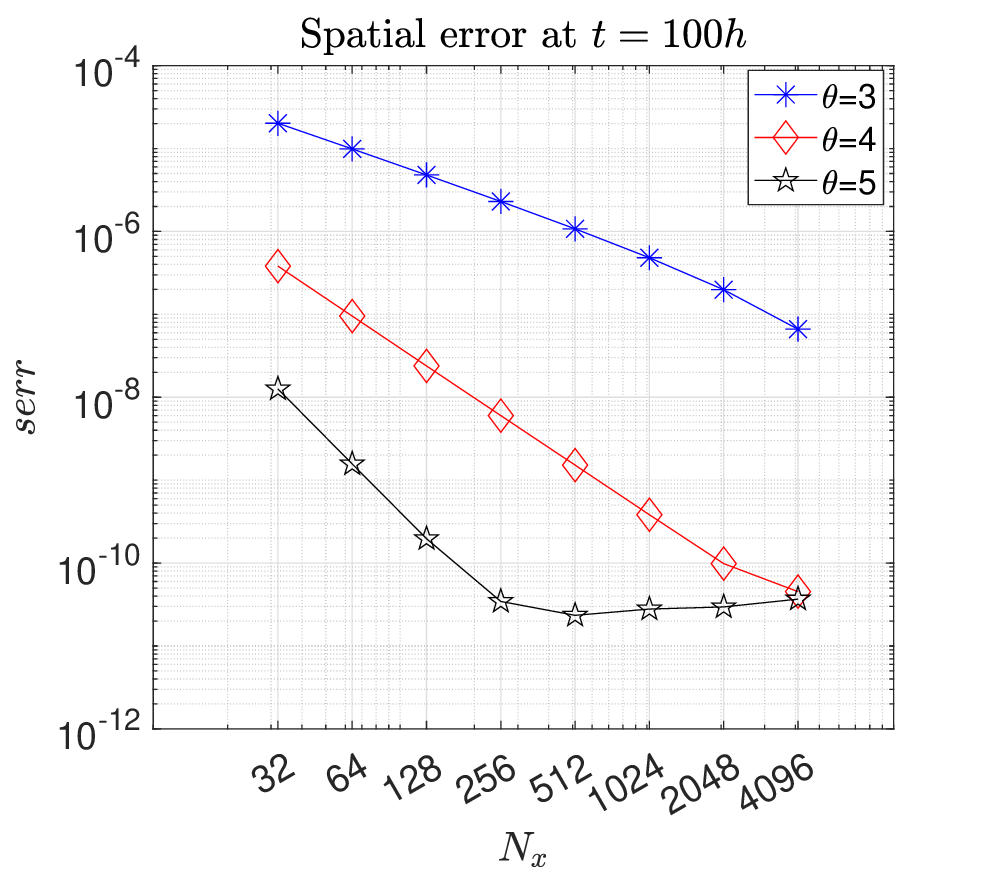}
\includegraphics[width=3.9cm,height=4.4cm]{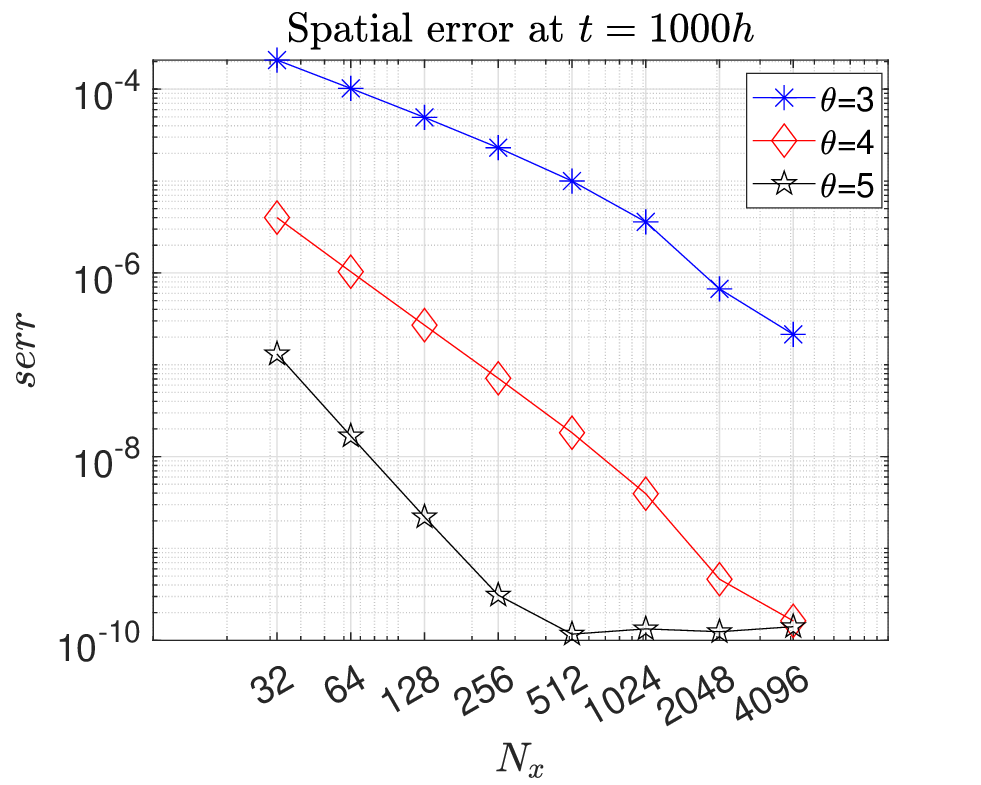}
\caption{Spatial error of LRI: the error $serr=\frac{\norm{u_n-u(t_n)}_{H^{1}}}{\norm{u(t_n)}_{H^{1}}}+\frac{\norm{v_n-v(t_n)}_{L^{2}}}{\norm{v(t_n)}_{L^{2}}}$ with  initial data $H^{\theta}(\mathbb{T}) \times H^{\theta-1}(\mathbb{T})$ against different $N_x$.} \label{p0new}
\end{figure}

    \begin{figure}[t!]
\centering
\includegraphics[width=3.9cm,height=4.4cm]{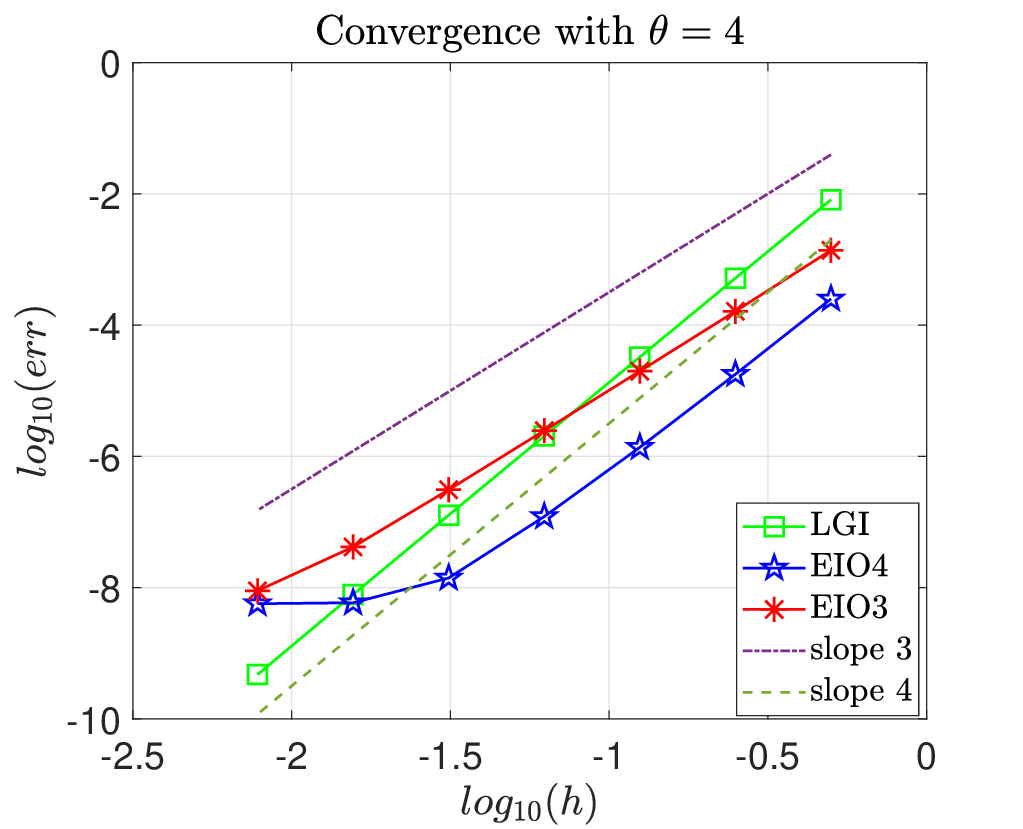}
\includegraphics[width=3.9cm,height=4.4cm]{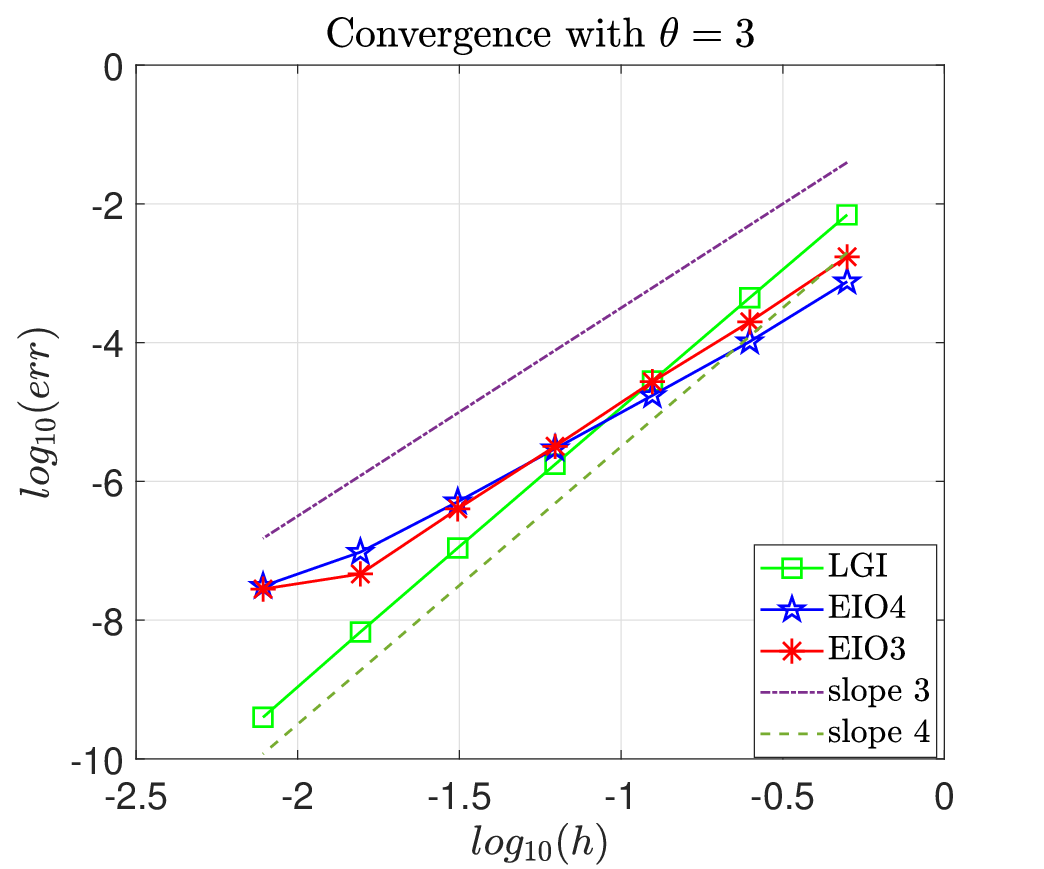}
\includegraphics[width=3.9cm,height=4.4cm]{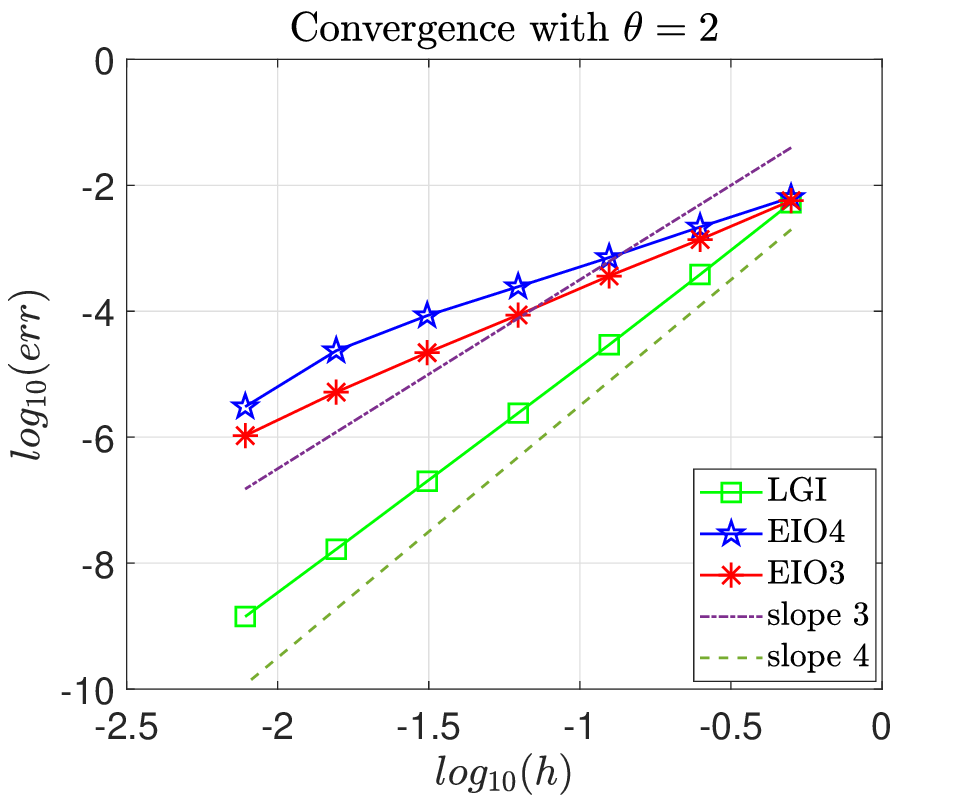}\\
\caption{Temporal error $err=\frac{\norm{u_n-u(t_n)}_{H^{1}}}{\norm{u(t_n)}_{H^{1}}}+\frac{\norm{v_n-v(t_n)}_{L^{2}}}{\norm{v(t_n)}_{L^{2}}}$ at $T=1$ with  initial data $H^{\theta}(\mathbb{T}) \times H^{\theta-1}(\mathbb{T})$ against $h $  with $h =1/2^k$, where $k=1,2,\ldots,7$.} \label{p1}
\end{figure}


    \begin{figure}[t!]
\centering
\includegraphics[width=3.9cm,height=4.4cm]{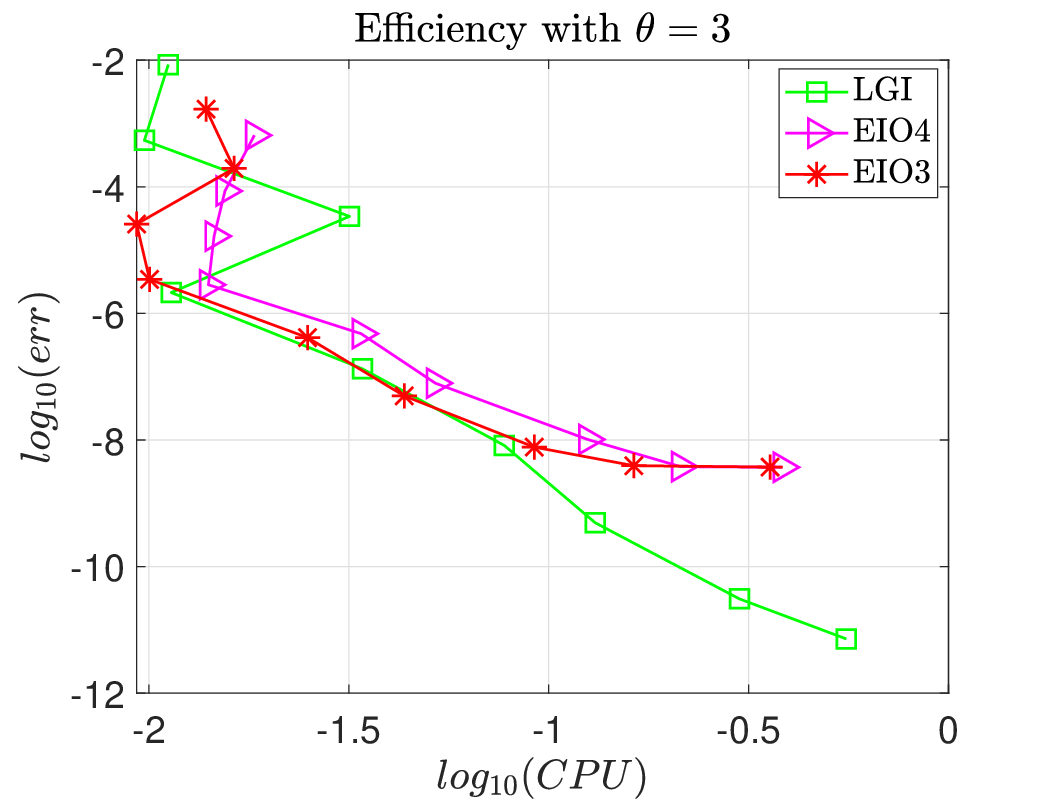}
\includegraphics[width=3.9cm,height=4.4cm]{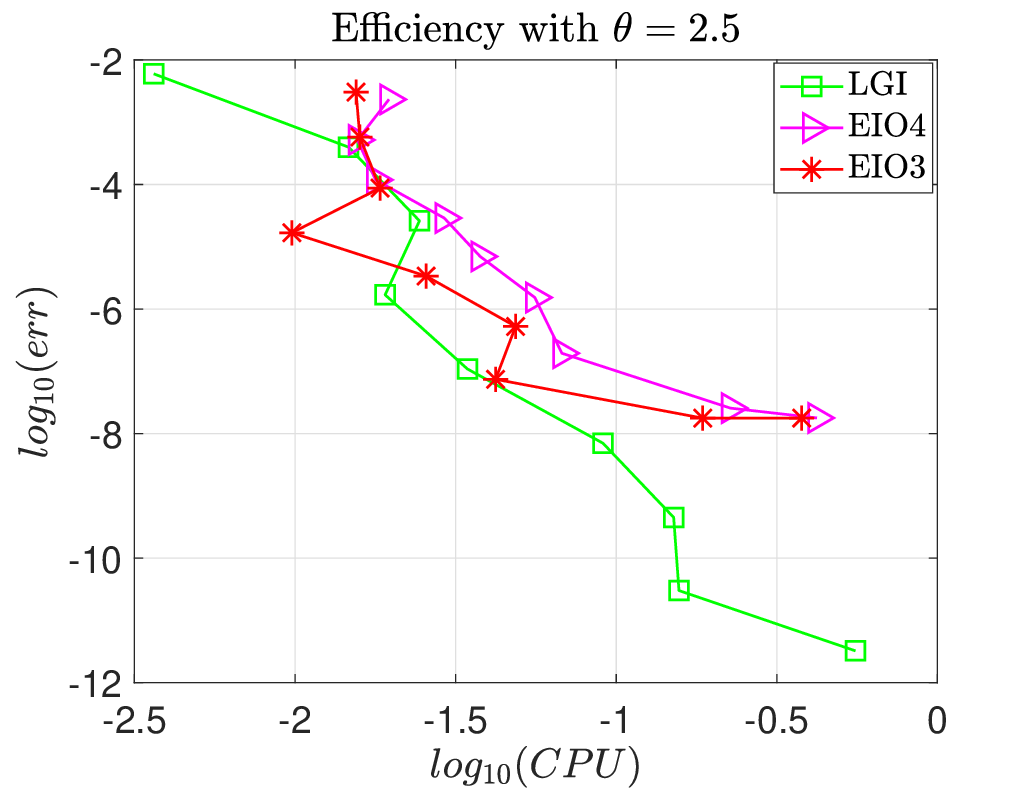}
\includegraphics[width=3.9cm,height=4.4cm]{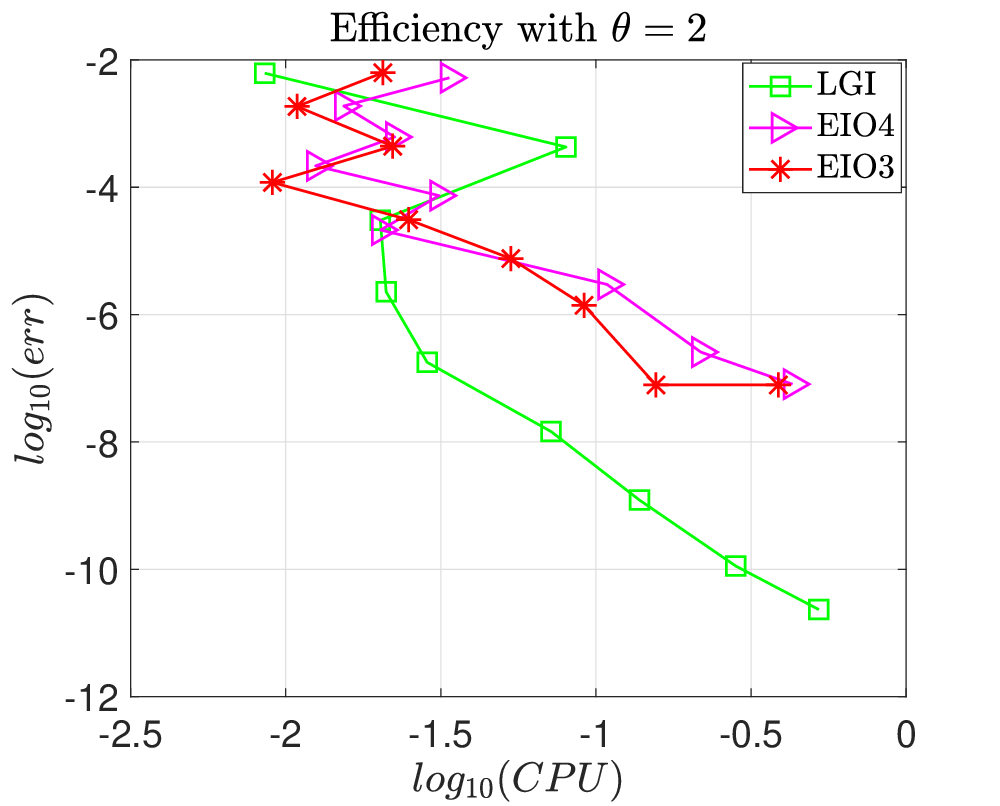}
\caption{Efficiency comparison: $err=\frac{\norm{u_n-u(t_n)}_{H^{1}}}{\norm{u(t_n)}_{H^{1}}}+\frac{\norm{v_n-v(t_n)}_{L^{2}}}{\norm{v(t_n)}_{L^{2}}}$ at $T=1$  against different CPU time produced by different  $h =1/2^k$, where $k=1,2,\ldots,7$.}\label{fig2}
\end{figure}

\section{Conclusions}\label{sec:5}
 In this paper,   a  low-regularity   trigonometric integrator was formulated and analysed  for
solving the nonlinear Klein-Gordon equation in the $d$-dimensional space with $d=1,2,3$.
Rigorous error estimates were given and the proposed integrator was shown to have
fourth-order time accuracy  in the energy space under a weak regularity condition.    A numerical experiment was carried out and the corresponding numerical results  were presented to  demonstrate  the behaviour of the new integrator  in comparison with   some well-known exponential integrators.

\section*{AI Usage Statement}
 No artificial intelligence tools are used in the whole process of the algorithm framework design, theoretical derivation, error analysis, numerical experiment scheme demonstration, and program coding of the fourth-order low-regularity method proposed in this paper. All algorithm improvement designs, mathematical analysis,  and numerical implementation codes are independently completed by the authors through manual research, deduction and programming. 

\section*{Acknowledgements}
We sincerely thank Professor Christian Lubich for his insightful comments, valuable suggestions, and rigorous guidance on this work. 
This work was supported by NSFC (12371403, 12501567, 12271426), the Youth Project (Category A) of Shaanxi Provincial Natural Science Basic Research Program (2026JC-JCQN-013), and the Shaanxi Fundamental Science Research Project for Mathematics and Physics (25JSY046).


\end{document}